\numberwithin{equation}{section}
\numberwithin{equation}{section}
\numberwithin{equation}{section}
\newtheorem{thm}{Theorem}[section]
\newtheorem{cor}[thm]{Corollary}
\newtheorem{lem}[thm]{Lemma}
\newtheorem{defn}[thm]{Definition}
\newtheorem{exam}[thm]{Example}
\newtheorem{rem}[thm]{Remark}
\newcommand{\Ann}{\operatorname{Ann}\,}
\newcommand{\Hom}{\operatorname{Hom}\,}
\newcommand{\Ext}{\operatorname{Ext}\,}
\newcommand{\Max}{\operatorname{Max}\,}
\newcommand{\Ass}{\operatorname{Ass}\,}
\newcommand{\Assh}{\operatorname{Assh}\,}
\newcommand{\Supp}{\operatorname{Supp}\,}
\newcommand{\grad}{\operatorname{grade}\,}
\newcommand{\depth}{\operatorname{depth}\,}
\renewcommand{\dim}{\operatorname{dim}\,}
\newcommand{\Min}{\operatorname{Min}\,}
\newcommand{\gd}{\operatorname{G-dim}\,}
\newcommand{\h}{\operatorname{ht}\,}
\newcommand{\Z}{\mbox{Z}}
\newcommand{\D}{\mbox{D}}
\newcommand{\fa}{\mathfrak{a}}
\newcommand{\fb}{\mathfrak{b}}
\newcommand{\fm}{\mathfrak{m}}
\newcommand{\fp}{\mathfrak{p}}
\newcommand{\fx}{\mathfrak{x}}
\newcommand{\fc}{\mathfrak{c}}
\begin{document}
\bibliographystyle{amsplain}


\title[Linkage of ideals over a module]
 {Linkage of ideals over a module}

\bibliographystyle{amsplain}

     \author[M. jahangiri]{Maryam jahangiri}
     \author[kh. sayyari]{khadijeh sayyari}

\keywords{Linkage of ideals, Cohen-Macaulay modules, Canonical
module.}

\subjclass[2010]{13C40, 13C14.}

\maketitle
$\address{\textrm{Faculty of Mathematical Sciences and Computer,
Kharazmi University, Tehran, Iran.}}$

$$\email{\textrm{jahangiri@khu.ac.ir    ,
std-sayyari@khu.ac.ir}}$$


\begin{abstract} Inspired by the works in linkage theory
of ideals, we define the concept of linkage of ideals over a module.
Several known theorems in linkage theory are improved or recovered by new approaches.
Specially, we make some extensions and generalizations of the basic result of  Peskine
 and Szpiro \cite[prop 1.3]{PS}, namely if $R$ is a Gorenstain local ring, $\fa \neq 0$ (an ideal of $R$)
 and $\fb := 0:_R \fa$ then $\frac{R}{\fa}$ is Cohen-Macaulay if and only if $\frac{R}{\fa}$ is unmixed and $\frac{R}{\fb}$ is Cohen-Macaulay.

\end{abstract}

\bibliographystyle{amsplain}
\section{introduction}
Classically, linkage theory refers to
Halphen (1870) and M. Noether \cite{No}(1882) who worked to classify
space curves. In 1974 the significant work of Peskine and Szpiro \cite{PS} brought
breakthrough to this theory and stated it in the modern algebraic
language; two proper ideals $\fa$ and $\fb$ in a Cohen-Macaulay local ring $R$ is said to be linked if
there is a regular sequence $\underline{x}$ in their intersection such that
$\fa =\underline{x}:_R \fb$ and $\fb = \underline{x} :_R \fa$.

A new progress in the linkage theory is the work  of  Martsinkovsky
and Strooker
 \cite{MS} which established the concept of linkage of modules.

 Let $R$ be a commutative Noetherian ring with $1\neq 0$ and $M$ be a finitely generated $R$-module.
  In this paper, inspired by the works in the ideal case, we present the concept of linkage of ideals over
  a module; let $\fa, \fb$ and $I\subseteq \fa \cap \fb$ be ideals of $R$ such that $I$ is generated by an
  $M$-regular sequence and $\fa M\neq M \neq \fb M$. Then $\fa$ and $\fb$ are said to be linked by $I$ over
   $M$, denoted by $\fa\sim_{(I;M)}\fb$, if $\fb M = IM:_M\fa$ and $\fa M = IM:_M\fb $. This is a generalization of its classical concept, when
    $M = R$. It is citable that, in general case, linkedness of two ideals over $M$ does not imply linkedness
    of them over $R$ and vice versa (see Example \ref{E3}). But, in some special cases, it does (see \ref{E4}, \ref{t2} and \ref{t3}).

 In this paper, we consider the above generalization and study some of its basic facts. The organization of the paper goes as follows.

In Section 2, we study some basic properties of linkage of ideals over a module.

By the above definition of linkage of ideals over a module it is
natural to ask whether linkedness of two ideals over a module
implies linkedness
    of them over the ring and vice versa. Section 3 has considered this question in the case where $R$
is a Cohen-Macaulay local ring with the canonical module $\omega_R$
and has studied ideals which are linked over $\omega_R$. More precisely,
it is shown that if $\fa \sim _{(I;w_R)} \fb$ and $\frac{R}{\fa}$
and $\frac{R}{\fb}$ are unmixed, then $\fa\sim _{(I;R)}\fb$ (Theorem
\ref{t2}). Also,  if $\fa \sim_{(I;R)} \fb$ and $\frac{w_R}{\fa
w_R}$ and $\frac{w_R}{\fb w_R}$ are unmixed, then $\fa \sim
_{(I;w_R)} \fb$ (Theorem \ref{t3}).

The first main theorem in the theory of linkage is the following.

{\it {Theorem A. \cite[1.3]{PS} If $(R,\fm)$ is a Gorenstein local ring and $\fa$
 and $\fb$ are two linked ideals of R, then $R/\fa$ is Cohen-Macaulay if and only if $R/\fb$ is Cohen-Macaulay.}}

Attempts to generalize this theorem lead to several developments in
linkage theory, especially the works by C. Huneke \cite{Hu}, B.
Ulrich \cite{U1} and \cite{U2}. A counterexample given by Peskine
and Szpiro in   \cite{PS}, shows that {\it Theorem A} is no longer
true if the base ring $R$ is Cohen-Macaulay but not Gorenstein.

In Section 4, we state {\it Theorem A} for linked ideals over the canonical module $\omega_R$. Namely, it is shown that if the ideals $\fa$ and $\fb$ are linked over the canonical module $\omega_R$ and the G-dimension of some certain modules are finite then Cohen-Macaulay-ness of $R/\fa$ and of $R/\fb$ are equivalent (Corollary \ref{c8}).

Moreover, let $(R,\fm)$ be a Cohen-Macaulay local ring and $\fa , \fb$ be pure height ideals of $R$ which are linked over the maximal Cohen-Macaulay $R$-module $M$ of finite injective dimension. Then, it is shown that $\gd_R \frac{R}{\fa} < \infty$ and $\frac{R}{\fa}$ is Cohen-Macaulay if and only if  $\gd_R \frac{R}{\fb} < \infty$ and $\frac{R}{\fb}$ is Cohen-Macaulay (Corollary \ref{c9}).

Throughout the paper, $R$ denotes a non-trivial commutative Noetherian ring, $\fa$ and $\fb$ are non-zero proper ideals of $R$ and $M$ will denote a finitely generated $R$-module.


\section{linked ideals over a module}


The goal of this section is to introduce the concept of linkage of
ideals over a module and study some of its basic properties.

\begin{defn}\label{F1}
 \emph{Assume that $\fa M\neq M\neq \fb M$
 and let $I\subseteq \fa \cap \fb$ be an ideal generated by an $M$-regular sequence.
  Then we say that the ideals $\fa$ and $\fb$ are linked by $I$ over $M$,
  denoted $\fa\sim_{(I;M)}\fb$, if $\fb M = IM:_M\fa$ and $\fa M = IM:_M\fb $.
  The ideals $\fa$ and $\fb$ are said to be geometrically linked by $I$ over $M$ if $\fa M \cap \fb M =
  IM$. Also, we say that the ideal $\fa$ is linked over $M$ if there
  exist ideals $\fb$ and $I$ of $R$ such that $\fa\sim_{(I;M)}\fb$.
  $\fa$ is $M$-selflinked by $I$ if $\fa\sim_{(I;M)}\fa$.}
\end{defn}

Note that this definition is a generalization of the concept of
linkage of ideals in \cite{PS}. But, as the Example \ref{E3} shows,
these two concepts do not coincide although, in some cases they do
(e.g. Example \ref{E1}).

\begin{exam}\label{E2}
Let $x_1,... ,x_n$ be an $M$-regular sequence. Then one can see that
\begin{eqnarray*} (x_1,... ,x_n)\sim_{(((x_1)^2,x_2,... ,x_n);M)}(x_1,... ,x_n).
\end{eqnarray*}
In other words, every $M$-regular sequence is $M$-selflinked.
\end{exam}

\begin{lem}\label{l13}
Let $N$ be a finitely generated $R$-module, $x_1,...,x_t \in \fa \cap \fb$ and $I: = (x_1,...,x_t)$. Then $\fa\sim _{(I;M \oplus N)}\fb$ if and only if $\fa\sim _{(I;M)}\fb$ and $\fa\sim _{(I;N)}\fb$.
\end{lem}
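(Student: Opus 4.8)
The plan is to exploit the fact that all the module-theoretic operations involved---forming $IM$, multiplying by an ideal, and the colon $IM:_M \fa$---distribute over the direct sum $M \oplus N$. First I would check the easy half of the "$M$-regular sequence" hypothesis: the sequence $x_1,\dots,x_t$ is $(M\oplus N)$-regular if and only if it is both $M$-regular and $N$-regular, since a nonzerodivisor on a direct sum is exactly a nonzerodivisor on each summand, and $(M\oplus N)/(x_1,\dots,x_i)(M\oplus N) \cong M/(x_1,\dots,x_i)M \oplus N/(x_1,\dots,x_i)N$; similarly $(\fa\cap\fb)(M\oplus N) \neq M \oplus N$ iff $\fa M \neq M$ or $\fa N \neq N$ and likewise for $\fb$---actually I need $\fa(M\oplus N)\neq M\oplus N$, which holds iff $\fa M \neq M$ \emph{and} $\fa N \neq N$ fails to be needed on both; I would state the genuine equivalence carefully, noting that the definition requires $\fa(M\oplus N)\neq M\oplus N$ and $\fb(M\oplus N)\neq M\oplus N$, each of which is equivalent to the conjunction of the corresponding conditions on $M$ and on $N$. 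So the ambient hypotheses of Definition \ref{F1} match up on both sides of the asserted equivalence.

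The core computation is the identity
\begin{equation*}
I(M\oplus N) :_{M\oplus N} \fa \;=\; \bigl(IM :_M \fa\bigr) \oplus \bigl(IN :_N \fa\bigr).
\end{equation*}
This is immediate: an element $(m,n)$ satisfies $\fa(m,n) \subseteq I(M\oplus N) = IM \oplus IN$ exactly when $\fa m \subseteq IM$ and $\fa n \subseteq IN$. Granting this, the linkage equation $\fb(M\oplus N) = I(M\oplus N):_{M\oplus N}\fa$ becomes
\begin{equation*}
\fb M \oplus \fb N \;=\; \bigl(IM:_M\fa\bigr) \oplus \bigl(IN:_N\fa\bigr),
\end{equation*}
and since a direct sum decomposition of submodules of $M\oplus N$ respecting the two summands is unique, this holds iff $\fb M = IM:_M\fa$ and $\fb N = IN:_N\fa$. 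The symmetric equation with $\fa$ and $\fb$ interchanged is handled identically. Combining the two gives $\fa \sim_{(I;M\oplus N)}\fb$ iff ($\fa\sim_{(I;M)}\fb$ and $\fa\sim_{(I;N)}\fb$), which is the claim.

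I do not expect a serious obstacle here; the only point requiring a little care is bookkeeping the side conditions $\fa M \neq M$ etc. so that the statement is an honest "if and only if" rather than an implication that silently assumes the modules are nonzero---in particular one should observe that if, say, $N = 0$ then $\fa \sim_{(I;N)}\fb$ holds vacuously only if the definition permits the zero module, which it does not since it requires $\fa N \neq N$; thus the statement implicitly presumes both $M$ and $N$ are such that the linkage relation can even be posed, and under that presumption the direct-sum identities above close the argument. If one wants to be fully rigorous, I would simply remark that the hypotheses of Lemma \ref{l13} include that $\fa \sim_{(I;M)}\fb$ and $\fa\sim_{(I;N)}\fb$ are meaningful statements, i.e.\ that the requisite non-equalities hold, and then the two colon identities do all the work.
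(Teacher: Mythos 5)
Your proof is correct and takes essentially the same route as the paper: both arguments reduce to the observation that $I(M\oplus N)=IM\oplus IN$ and that products by ideals and colon submodules decompose componentwise, the paper merely packaging this as two separate implications after reducing to $I=0$. One small slip in your bookkeeping: $\fa(M\oplus N)\neq M\oplus N$ is equivalent to the \emph{disjunction} $\fa M\neq M$ or $\fa N\neq N$, not the conjunction; since you then stipulate (as the paper tacitly does) that the nondegeneracy conditions $\fa M\neq M\neq \fb M$ and $\fa N\neq N\neq \fb N$ are presumed, your colon identities do close the argument.
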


\begin{proof}
Note that we can consider the case where $I =0$. Also, it is straightforward to see that $x_1,...,x_t$ is an $M \oplus N$-regular sequence if and only if it is an $M$-regular sequence and $N$-regular sequence.

Assume that $\fa\sim _{(0;M \oplus N)}\fb$. Then, $\fa\fb M = \fa\fb N =0$. Also, if $m \in 0:_M \fb$ then $(m, 0) \in 0:_{M\oplus N} \fb = \fa M \oplus \fa N$. Therefore, $\fa\sim _{(0;M)}\fb$. Similarly, $\fa\sim _{(0; N)}\fb$.

For the converse, one has $0:_{M\oplus N} \fb = (0:_M \fb) \oplus (0:_N \fb) = \fa M \oplus \fa N = \fa (M \oplus N)$.
\end{proof}

\begin{exam}\label{E1}
Let $F$ be a finitely generated free R-module. Then $\fa\sim_{(0;R)}\fb$ iff $\fa\sim_{(0;F)}\fb$.
\end{exam}

The following lemma, among other things, shows that if $\fa$ and
$\fb$ are linked by $I$ over $M$ then the ideal $I$ has to be
generated by a maximal $M$-regular sequence in $\fa\cap\fb.$
\begin{lem} \label{l6}
Let $I$ be a proper ideal of $R$ such that $\fa\sim_{(I;M)}\fb$. Then
\begin{itemize}
   \item [(i)] $\grad_M \fa = \grad_M \fb= \grad_M I$.
   \item [(ii)]  $\Supp \Hom_R (\frac{R}{\fa},\frac{M}{IM})= \Supp \frac{M}{\fa M}$.
   \item [(iii)]   $\Supp \frac{M}{IM}= \Supp \frac{M}{\fa M}\cup \Supp \frac{M}{\fb M}.$
   \end{itemize}
\end{lem}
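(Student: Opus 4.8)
The plan is to work with the defining equalities $\fb M = IM:_M \fa$ and $\fa M = IM:_M \fb$, and to pass to the quotient module $\overline{M} := M/IM$. Since $I$ is generated by an $M$-regular sequence, $\grad_M I$ equals the length of that sequence, and moreover $\Hom_R(R/\fa, M/IM) = IM:_M\fa / IM = \fb M/IM$, with the analogous statement for $\fb$. This already hands us part (ii): the support of $\Hom_R(R/\fa, \overline{M})$ is the support of $\fb M/IM$, and because $\fa M = IM:_M\fb \supseteq IM$ we have $\Supp(\fb M/IM)$; but we want $\Supp(M/\fa M)$ instead, so the real content of (ii) is to identify $\Supp(\fb M/IM) = \Supp(M/\fa M)$. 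For this I would localize at a prime $\fp$ and use that $(\fb M/IM)_\fp \neq 0$ iff $\fp \in \Supp\Hom_R(R/\fa,\overline M)$ iff $\depth_{R_\fp}((\overline M)_\fp)$ over $R_\fp/\fa R_\fp$ is zero, i.e.\ iff $\Ass$ of $(\overline M)_\fp$ meets $V(\fa R_\fp)$; symmetrically $(\fa M/IM)_\fp\ne 0$ detects $V(\fb R_\fp)$. The key point will be that every associated prime of $\overline M$ contains $\fa$ or $\fb$ — indeed $\fa\fb \overline M = 0$ since $\fa(\fb M)\subseteq \fa(IM:_M\fa)\subseteq IM$ — so $\Ass\overline M \subseteq V(\fa)\cup V(\fb)$, and localizing preserves this.

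For part (iii), the inclusion $\Supp(M/\fa M)\cup\Supp(M/\fb M)\subseteq \Supp(M/IM)$ is immediate from $IM\subseteq \fa M$ and $IM\subseteq \fb M$. For the reverse inclusion, suppose $\fp\in\Supp(M/IM)$, i.e.\ $(\overline M)_\fp\ne 0$. Then $(\overline M)_\fp$ has an associated prime contained in $\fp R_\fp$, and by the observation above that associated prime contains $\fa R_\fp$ or $\fb R_\fp$; in the first case $\fa\subseteq\fp$ and $(M/\fa M)_\fp\ne 0$ — wait, one must check $M_\fp\ne 0$, which follows since $(\overline M)_\fp\ne 0$ forces $M_\fp\ne 0$ — so $\fp\in\Supp(M/\fa M)$, and in the second case $\fp\in\Supp(M/\fb M)$. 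That closes (iii).

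Finally (i): we have $\grad_M I \le \grad_M \fa$ and $\grad_M I\le\grad_M\fb$ since $I\subseteq\fa\cap\fb$. For the reverse, note $\Hom_R(R/\fa,M/IM)=\fb M/IM$, which is nonzero because $\fb M\supsetneq IM$ — here one uses that $\fb M = IM$ would force $\fa M = IM:_M\fb = IM:_M(IM:_M M)\cdots$; more cleanly, if $\fb M = IM$ then $\fb\subseteq$ the annihilator issue makes $M/\fb M = M/IM$, and combined with $\fa M = IM:_M\fb\supseteq$ everything, one gets $\fa M = M$, contradicting $\fa M\ne M$. Hence $\Hom_R(R/\fa,M/IM)\ne 0$, so $\fa$ contains a zerodivisor on $M/IM$, giving $\grad_M\fa \le \grad_M I + 0$? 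The precise statement is $\grad_M\fa = \grad_M I$ because the $M$-regular sequence generating $I$ lies in $\fa$ and cannot be extended inside $\fa$ (any extension would be an $M/IM$-regular element in $\fa$, contradicting $\Hom_R(R/\fa,M/IM)\ne 0$); symmetrically for $\fb$. I expect the main obstacle to be the bookkeeping in (ii)–(iii) to make sure one never divides by a zero module and that the ``$\Ass\overline M\subseteq V(\fa)\cup V(\fb)$'' step is stated at the right level of localization; the grade computation in (i) is then essentially a one-line consequence of $\Hom_R(R/\fa,M/IM)\ne 0$.
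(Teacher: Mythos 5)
Parts (i) and (iii) of your proposal are correct. Your (i) is essentially the paper's argument: $\fb M\neq IM$ (since $\fb M\subseteq IM$ would force $\fa M=IM:_M\fb=M$), hence $0:_{M/IM}\fa=\fb M/IM\neq 0$, so $\fa\subseteq \Z_R(M/IM)$ and the regular sequence generating $I$ is a maximal $M$-regular sequence in $\fa$; symmetrically for $\fb$. Your (iii) is a valid direct argument from $\fa\fb M\subseteq IM$, which gives $\Supp \frac{M}{IM}\subseteq \Supp M\cap\bigl(V(\fa)\cup V(\fb)\bigr)=\Supp\frac{M}{\fa M}\cup\Supp\frac{M}{\fb M}$; the paper instead deduces (iii) from (ii) and the exact sequence $0\to \fb M/IM\to M/IM\to M/\fb M\to 0$, and both routes are fine.

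The gap is in (ii), in the inclusion $\Supp\frac{M}{\fa M}\subseteq\Supp\Hom_R(\frac{R}{\fa},\frac{M}{IM})=\Supp\frac{\fb M}{IM}$. The mechanism you announce for it, namely $\Ass\frac{M}{IM}\subseteq V(\fa)\cup V(\fb)$ (a consequence of $\fa\fb M\subseteq IM$), does not decide which of the two ideals the relevant local associated primes contain: for $\fp\in\Supp\frac{M}{\fa M}$ you get $(M/IM)_\fp\neq 0$ and each of its associated primes contains $\fa R_\fp$ or $\fb R_\fp$, but if they all contain $\fb R_\fp$ and none contains $\fa R_\fp$, your argument stops (you would only learn $(\fa M/IM)_\fp\neq 0$, which contradicts nothing). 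This bad case must be excluded using the second linkage equality $\fa M=IM:_M\fb$, which your sketch of (ii) never invokes: if no associated prime of $(M/IM)_\fp$ contains $\fa R_\fp$, then $\fa R_\fp$ contains an $(M/IM)_\fp$-regular element, hence $\fb M_\fp=(IM:_M\fa)_\fp=IM_\fp$, and then $\fa M_\fp=IM_\fp:_{M_\fp}\fb R_\fp=M_\fp$, contradicting $\fp\in\Supp\frac{M}{\fa M}$. This is exactly the paper's one-line proof of (ii): for $\fp\in\Supp\frac{M}{\fa M}$ one has $\fb M_\fp\neq IM_\fp$. That the annihilation fact together with only the first equality cannot suffice is shown by $R=k[[x,y]]$, $M=R/(y)$, $\fa=(x)$, $\fb=(y)$, $I=0$: there $\fa\fb M=0$, $\fb M=0:_M\fa$, $\fa M\neq M\neq\fb M$ and $\Ass M\subseteq V(\fa)\cup V(\fb)$, yet $\fm\in\Supp\frac{M}{\fa M}$ while $0:_M\fa=0$; of course the second equality fails in that example, which is precisely the hypothesis your (ii) leaves unused. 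With that one step inserted, your proof of (ii), and hence of the lemma, is complete.
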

\begin{proof}
 \begin{itemize}
   \item [(i)] Note that $\fb M\neq IM$. Hence $0:_{\frac{M}{IM}}\fa \neq 0$ and so $I \subseteq \fa \subseteq \Z_R(\frac{M}{IM})$ which implies that $\grad_M I = \grad_M \fa$.
   \item [(ii)] Let $\fp \in \Supp \frac{M}{\fa M}.$ Then, $\fb M_{\fp} \neq IM_{\fp}$. Hence $\fp \in \Supp \frac{\fb M}{IM} = \Supp\Hom_R (\frac{R}{\fa},\frac{M}{IM})$. The converse is clear.

   \item [(iii)] Follows from (ii) and using the following short exact sequence $$0\rightarrow \frac{\fb M}{IM} \rightarrow \frac{M}{IM}\rightarrow \frac{M}{\fb M}\rightarrow 0.$$
   \end{itemize}
\end{proof}
The following Example shows that the concepts of linkage of ideals over $R$ and over $M$ do not coincide.

\begin{exam}\label{E3}
Let $R: = \frac{k[x,y]}{(xy)}$ and $M:= \frac{k[x,y]}{(x)}$ where $k$ is a field. Via the natural homomorphim $ R \rightarrow M$, $M$ is a finitely generated $R$-module. Set $\fa:=(x)$ and $\fb:=(y)$.
 \begin{itemize}
   \item [(i)] As $y$ is an $M$-regular sequence, $\fb\sim _{((y^2);M)}\fb$. Assume that $\fb\sim _{(I;R)}\fb$ for some ideals $I$. Since $\grad_R\fb = 0$, by \ref{l6}, $I=0$. But, $0:_R\fb \neq \fb$ .
   \item [(ii)] $\fa\sim _{(0;R)}\fb$. Assume that $\fa\sim _{(I;M)}\fb$ for some ideals $I$. As $\grad_M\fa = 0$, by \ref{l6}, $I=0$. On the other hand, $0:_M\fa = M$, which is a contradiction.
   \end{itemize}
\end{exam}

\begin{lem}\label{R1}
Let $I$ be a proper ideal of $R$ such that $\fa\sim_{(I;M)}\fb$. Then, $\frac{M}{\fa M}$ can be embedded in finite copies of $\frac{M}{IM}$.
\end{lem}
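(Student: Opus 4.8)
The plan is to build the embedding by hand from a finite generating set of $\fb$. Since $R$ is Noetherian, write $\fb = (b_1,\dots,b_s)$. The idea is that the condition $\fa\sim_{(I;M)}\fb$ forces $\fa M = IM:_M\fb$, and this colon can be read as the kernel of multiplication by the $b_i$ into $M/IM$.

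Concretely, I would define the $R$-linear map
\[
\varphi : M \lo \Big(\tfrac{M}{IM}\Big)^{s}, \qquad m \longmapsto \big(b_1 m + IM,\ \dots,\ b_s m + IM\big).
\]
Then $\Ker \varphi = \{\, m\in M : b_i m \in IM \text{ for all } i \,\} = IM:_M\fb$, and by the hypothesis $\fa\sim_{(I;M)}\fb$ this equals $\fa M$. Hence $\varphi$ factors through an injection $\frac{M}{\fa M}\hookrightarrow \big(\frac{M}{IM}\big)^{s}$, which is exactly the claimed embedding into finitely many copies of $\frac{M}{IM}$.

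Equivalently, and perhaps more conceptually, one can phrase this via $\Hom$: the evaluation map $M\to \Hom_R\!\big(\fb,\frac{M}{IM}\big)$ sending $m$ to $\big(b\mapsto bm+IM\big)$ has kernel $IM:_M\fb = \fa M$, and applying $\Hom_R\!\big(-,\frac{M}{IM}\big)$ to a surjection $R^{s}\twoheadrightarrow\fb$ gives, by left-exactness of $\Hom$, an injection $\Hom_R\!\big(\fb,\frac{M}{IM}\big)\hookrightarrow \Hom_R\!\big(R^{s},\frac{M}{IM}\big)\cong \big(\frac{M}{IM}\big)^{s}$; composing yields the same embedding.

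There is essentially no serious obstacle here: the only points to check are that the colon module $IM:_M\fb$ really is the kernel of $\varphi$ (immediate from the definition once $\fb$ is written in terms of generators) and that finite generation of $\fb$ is available (guaranteed by Noetherianness of $R$). Note also that this argument only uses the one inclusion $\fa M = IM:_M\fb$ of the linkage hypothesis, not the full symmetry.
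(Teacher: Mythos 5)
Your proof is correct and is essentially the paper's argument in explicit form: the paper applies $\Hom_{\frac{R}{I}}(-,\frac{M}{IM})$ to a finite free presentation $F\rightarrow \frac{R}{I}\rightarrow\frac{R}{\fb}\rightarrow 0$, and the resulting map $(\frac{R}{I})^{+}\rightarrow F^{+}$ is exactly your multiplication-by-generators map, with kernel identified via $\fa M = IM:_M\fb$. Both arguments use only that one equality, so your proposal matches the paper's proof in substance.
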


\begin{proof}
Let $F \rightarrow \frac{R}{I} \rightarrow \frac{R}{\fb} \rightarrow 0 $ be a free resolution of $\frac{R}{\fb}$ as $\frac{R}{I}$-module. Then, applying the functor $(-)^+:= \Hom_{\frac{R}{I}} (- , \frac{M}{IM})$, we get the exact sequence $0 \rightarrow (\frac{R}{\fb})^+ \rightarrow (\frac{R}{I})^+ \stackrel{f}{\rightarrow} F^+$, where $\frac{M}{\fa M}\cong Im (f) \subseteq F^+ \cong \oplus \frac{M}{IM} $.
\end{proof}

There are some relations between ideals which are linked over a module, as the following lemma shows.
\begin{lem}\label{l10}
Let $I$ be an ideal of $R$ such that $\fa\sim_{(I;M)}\fb$. Then the following statements hold.
 \begin{itemize}
   \item [(i)] $\Min \Ass \frac{M}{IM} \subseteq \Min\Ass \frac{M}{\fa M} \cup \Min\Ass \frac{M}{\fb M} $.
   \item [(ii)] If $\Ass \frac{M}{IM} = \Min\Ass \frac{M}{IM}$, then $\Ass \frac{M}{IM} = \Ass \frac{M}{\fa M} \cup \Ass \frac{M}{\fb M} $ and $\Ass \frac{M}{\fa M} = \Ass \frac{M}{IM} \cap V(\fa) $.
   \item [(iii)] If $\fa M \cap \fb M = IM$, then $\Ass \frac{M}{\fa M} = \Ass \frac{M}{IM} \cap V(\fa) $ and $\Ass \frac{M}{\fa M} \cap\Ass \frac{M}{\fb M}= \emptyset.$
   \end{itemize}
\end{lem}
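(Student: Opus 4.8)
The plan is to deduce all three parts from Lemmas~\ref{l6} and~\ref{R1} together with two standard facts: for a finitely generated module $N$ one has $\Min \Ass N = \Min \Supp N$, and a prime that is minimal in a union $\Supp N_1 \cup \Supp N_2$ is minimal in $\Supp N_1$ or in $\Supp N_2$. For (i), Lemma~\ref{l6}(iii) gives $\Supp \frac{M}{IM} = \Supp \frac{M}{\fa M} \cup \Supp \frac{M}{\fb M}$, so a prime in $\Min \Ass \frac{M}{IM} = \Min \Supp \frac{M}{IM}$ is minimal in $\Supp \frac{M}{\fa M}$ or in $\Supp \frac{M}{\fb M}$, i.e. it lies in $\Min \Ass \frac{M}{\fa M} \cup \Min \Ass \frac{M}{\fb M}$.

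For (ii), observe first that the inclusion $\Ass \frac{M}{\fa M} \cup \Ass \frac{M}{\fb M} \subseteq \Ass \frac{M}{IM}$ is unconditional: by Lemma~\ref{R1} (applied to $\fa$, and, since Definition~\ref{F1} is symmetric in $\fa,\fb$, also to $\fb$) each of $\frac{M}{\fa M}$, $\frac{M}{\fb M}$ embeds into a finite direct sum of copies of $\frac{M}{IM}$. Under the hypothesis $\Ass \frac{M}{IM} = \Min \Ass \frac{M}{IM}$, the reverse inclusion is precisely the computation of (i). This settles the first equality. For $\Ass \frac{M}{\fa M} = \Ass \frac{M}{IM} \cap V(\fa)$, ``$\subseteq$'' is immediate, since $\fa$ annihilates $\frac{M}{\fa M}$ and $\Ass \frac{M}{\fa M} \subseteq \Ass \frac{M}{IM}$. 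For ``$\supseteq$'', let $\fp \in \Ass \frac{M}{IM} \cap V(\fa)$; I would first check that $\fp \in \Supp \frac{M}{\fa M}$, for otherwise $M_\fp = \fa M_\fp$ with $\fa R_\fp$ in the maximal ideal of $R_\fp$, and Nakayama forces $M_\fp = 0$, contradicting $\fp \in \Supp \frac{M}{IM} \subseteq \Supp M$. Then $\fp \in \Supp \frac{M}{\fa M} \subseteq \Supp \frac{M}{IM}$ with $\fp$ minimal in the latter, hence minimal in the former, so $\fp \in \Min \Ass \frac{M}{\fa M} \subseteq \Ass \frac{M}{\fa M}$.

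For (iii), the hypothesis $\fa M \cap \fb M = IM$ gives a canonical injection $\frac{M}{IM} \hookrightarrow \frac{M}{\fa M} \oplus \frac{M}{\fb M}$, $\bar m \mapsto (\bar m,\bar m)$, which together with Lemma~\ref{R1} yields $\Ass \frac{M}{IM} = \Ass \frac{M}{\fa M} \cup \Ass \frac{M}{\fb M}$ (here no unmixedness need be assumed). The extra ingredient is the colon identity
\[ IM :_M (\fa + \fb) \;=\; (IM :_M \fa) \cap (IM :_M \fb) \;=\; \fb M \cap \fa M \;=\; IM, \]
which uses both linkage relations and the geometric linkage hypothesis; it says that $\fa + \fb$ is contained in no associated prime of $\frac{M}{IM}$. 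Consequently, if $\fp \in \Ass \frac{M}{\fa M} \cap \Ass \frac{M}{\fb M}$ then $\fa + \fb \subseteq \fp$ while $\fp \in \Ass \frac{M}{IM}$, which is impossible; hence the intersection is empty. For the equality $\Ass \frac{M}{\fa M} = \Ass \frac{M}{IM} \cap V(\fa)$, ``$\subseteq$'' is as in (ii), and for ``$\supseteq$'' a prime $\fp \in \Ass \frac{M}{IM} \cap V(\fa)$ lies in $\Ass \frac{M}{\fa M} \cup \Ass \frac{M}{\fb M}$, the second alternative being excluded because it would again force $\fa + \fb \subseteq \fp$.

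I do not expect a serious obstacle once Lemmas~\ref{l6} and~\ref{R1} are in hand. The two points that need genuine care are the Nakayama step in (ii) --- over a non-local $R$ the condition $\fp \supseteq \fa$ by itself does not place $\fp$ in $\Supp \frac{M}{\fa M}$, so one must actually exclude $M_\fp = \fa M_\fp$ --- and recognizing that geometric linkage over $M$ is equivalent to the colon identity $IM :_M (\fa + \fb) = IM$, which is what drives both the description of $\Ass \frac{M}{\fa M}$ via $V(\fa)$ and the disjointness in (iii).
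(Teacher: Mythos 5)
Your proof is correct. Parts (i) and (ii) follow the paper's own route: the support decomposition from Lemma \ref{l6}(iii), the embedding of Lemma \ref{R1} (applied symmetrically to $\fa$ and $\fb$), and the identification of minimal associated primes with minimal primes of the support; your explicit Nakayama check that $\fp\in\Supp\frac{M}{\fa M}$ is exactly what the paper's terse ``$\fp\in\Min(\Ann M+I)\cap V(\fa)$'' step amounts to. In part (iii) you take a genuinely different, and arguably cleaner, route. The paper proves $\Ass\frac{M}{\fa M}\supseteq\Ass\frac{M}{IM}\cap V(\fa)$ by an element computation: it locates $\fp$ as an associated prime of $\Hom_R(\frac{R}{\fa},\frac{M}{IM})\cong\frac{\fb M}{IM}$, picks $\alpha\in\fb M$ with $\fp=IM:_R\alpha$, and checks (using $\fa M\cap\fb M=IM$) that $\fp=\fa M:_R\alpha$; and it proves disjointness by a grade comparison, namely $0:_{\frac{M}{IM}}(\fa+\fb)=0$ would force $\grad_M(\fa+\fb)>\grad_M I$ while an associated prime containing $\fa+\fb$ would force equality. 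You instead use the diagonal embedding $\frac{M}{IM}\hookrightarrow\frac{M}{\fa M}\oplus\frac{M}{\fb M}$ together with the colon identity $IM:_M(\fa+\fb)=\fb M\cap\fa M=IM$, which shows directly that no associated prime of $\frac{M}{IM}$ contains $\fa+\fb$; this single observation yields both the disjointness and the exclusion of the $\fb$-alternative in the equality $\Ass\frac{M}{\fa M}=\Ass\frac{M}{IM}\cap V(\fa)$. Both arguments ultimately rest on the same fact $0:_{\frac{M}{IM}}(\fa+\fb)=0$, but your packaging avoids the element chase and the grade machinery, while the paper's element argument gives the slightly finer information that the associated prime is realized by an element of $\fb M$.
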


\begin{proof}
\begin{itemize}
   \item [(i)]  Let $ \fp\in \Min \Ass \frac{M}{IM}$. Assume that $ \fp \notin \Min \Ass \frac{M}{\fa M}$. Hence, in view of \ref{R1}, $\Ass \frac{M}{\fa M} \subseteq \Ass \frac{M}{IM}$ and $(\frac{M}{\fa M})_{\fp} = 0$. Therefore, by \ref{l6}(iii), $\fp \in \Min\Ass \frac{M}{\fb M}$.

   \item [(ii)] By $ (i) $, $$\Ass \frac{M}{IM} \subseteq \Min \Ass \frac{M}{\fa M} \cup \Min \Ass \frac{M}{\fb M} \subseteq \Ass \frac{M}{\fa M} \cup \Ass \frac{M}{\fb M}.$$ On the other hand, by \ref{R1}, $ \Ass \frac{M}{\fa M} \cup \Ass \frac{M}{\fb M} \subseteq \Ass \frac{M}{IM}$.

  Now, let $ \fp\in \Ass \frac{M}{IM} \cap V(\fa)$. By the assumption, $\fp \in \Min (\Ann M + I) \cap V(\fa)$. Hence $\fp \in \Min \Ass \frac{M}{\fa M}$. The converse is clear by \ref{R1}.

   \item [(iii)] $\Ass \frac{M}{\fa M} \subseteq \Ass \frac{M}{IM} \cap V(\fa),$ by \ref{R1}. For the converse, let $\fp \in \Ass \frac{M}{IM} \cap V(\fa)$. Hence, $\fp \in \Ass \Hom_R (\frac{R}{\fa},\frac{M}{IM}) = \Ass \frac{\fb M}{IM} $ and there exists $\alpha \in \fb M$ such that $\fp = 0:_R{\alpha + IM}$. Now, it is straight forward to see that $\fp = 0:_R{\alpha + \fa M}$. Therefore, $\fp \in\Ass \frac{M}{\fa M}$.

Now, let $ \fp\in \Ass \frac{M}{\fa M} \cap\Ass \frac{M}{\fb M}= \Ass M \cap V(\fa +\fb).$ This implies that $\grad_M\fa+\fb = \grad_MI.$ On the other hand, $$0:_{\frac{M}{IM}}\fa+\fb= 0:_{\frac{M}{IM}}\fa\cap0:_{\frac{M}{IM}}\fb=0,$$ which implies that $\grad_M\fa+\fb > \grad_MI$ and this is a contradiction. \end{itemize}
\end{proof}
The following lemma studies the Artinianness of $\frac{M}{\fa M}$ where $\fa$ is linked over $M.$ It will be used in the next corollary which consider the Cohen-Macaulayness of $M_{\fp}$ for some prime ideal $\fp$.
\begin{lem}\label{l7}
Let $(R,\fm)$ be a local ring and $I$ be an ideal of $R$ such that $\fa\sim_{(I;M)}\fb$ and $\Ass \frac{M}{IM} = \Min \Ass \frac{M}{IM}$ . Then the following conditions are equivalent:
 \begin{itemize}
   \item [(i)] $\frac{M}{IM}$ is Artinian.
   \item [(ii)]  $\frac{M}{\fa M}$ is Artinian.
   \item [(iii)] $\frac{M}{\fb M}$ is Artinian.
   \end{itemize}
Moreover, $\fa M \cap \fb M \neq IM$  if one of the above equivalent conditions holds.
\end{lem}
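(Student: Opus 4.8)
The plan is to translate ``Artinian'' into a statement about supports: over the Noetherian local ring $(R,\fm)$, a finitely generated module $N$ is Artinian if and only if $\Supp N\subseteq\{\fm\}$, and when $N\neq 0$ this is equivalent to $\Ass N=\{\fm\}$. First I would record that all three modules in question are nonzero and finitely generated: $M/\fa M\neq 0\neq M/\fb M$ by the definition of linkage, and $M/IM\neq 0$ since $I\subseteq\fa\cap\fb$ is a proper ideal contained in $\fm$ (Nakayama). I would also note that the link relation is symmetric in $\fa$ and $\fb$, so any statement proved for $\fa$ holds verbatim for $\fb$.

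For $(i)\Rightarrow(ii)$ and $(i)\Rightarrow(iii)$ I would simply invoke Lemma \ref{R1}: $M/\fa M$ (respectively $M/\fb M$) embeds into a finite direct sum of copies of $M/IM$, and a submodule of a finite direct sum of Artinian modules is Artinian.

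The heart of the argument is $(ii)\Rightarrow(i)$ (and, by symmetry, $(iii)\Rightarrow(i)$), and this is where the hypothesis $\Ass(M/IM)=\Min\Ass(M/IM)$ is used. Assuming $M/\fa M$ is Artinian, it is nonzero, so $\fm\in\Ass(M/\fa M)$; by Lemma \ref{R1}, $\Ass(M/\fa M)\subseteq\Ass(M/IM)$, hence $\fm\in\Ass(M/IM)=\Min\Ass(M/IM)$. Now the elementary but decisive observation: since $\fm$ is the maximal ideal it contains every prime of $R$, so it cannot be a minimal element of $\Ass(M/IM)$ unless $\Ass(M/IM)=\{\fm\}$. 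Consequently $\Supp(M/IM)=\{\fm\}$, because the minimal primes of the support of a finitely generated module are associated primes, and therefore $M/IM$ is Artinian. This closes the cycle and yields $(i)\Leftrightarrow(ii)\Leftrightarrow(iii)$.

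For the final assertion, suppose one of the equivalent conditions holds; then all do, so $M/\fa M$ and $M/\fb M$ are both nonzero and Artinian, whence $\fm\in\Ass(M/\fa M)\cap\Ass(M/\fb M)$. If $\fa M\cap\fb M=IM$ held, then Lemma \ref{l10}(iii) would force $\Ass(M/\fa M)\cap\Ass(M/\fb M)=\emptyset$, a contradiction; hence $\fa M\cap\fb M\neq IM$. I expect the only genuinely delicate step to be $(ii)\Rightarrow(i)$: everything else is routine bookkeeping with associated primes and supports plus the two cited lemmas, whereas there one must notice that the combinatorial hypothesis on $\Ass(M/IM)$ upgrades ``$\fm$ is an associated prime'' to ``$\fm$ is the only associated prime''.
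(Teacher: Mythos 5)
Your proof is correct, and on the three equivalences it is essentially the paper's own argument: $(i)\Rightarrow(ii),(iii)$ by Lemma \ref{R1} (or simply because $\frac{M}{\fa M}$ is a quotient of $\frac{M}{IM}$), and for $(ii)\Rightarrow(i)$ exactly the reasoning the paper compresses into ``follows from \ref{R1} and the assumption'': $\fm\in\Ass\frac{M}{\fa M}\subseteq\Ass\frac{M}{IM}=\Min\Ass\frac{M}{IM}$ forces $\Ass\frac{M}{IM}=\{\fm\}$, hence $\Supp\frac{M}{IM}=\{\fm\}$ and $\frac{M}{IM}$ is Artinian; your appeal to symmetry in $\fa,\fb$ replaces the paper's use of Lemma \ref{l6} for $(ii)\Rightarrow(iii)$, which is harmless. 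The only genuine divergence is the final assertion: you derive $\fa M\cap\fb M\neq IM$ from Lemma \ref{l10}(iii) (geometric linkage would make $\Ass\frac{M}{\fa M}$ and $\Ass\frac{M}{\fb M}$ disjoint, contradicting $\fm$ lying in both), whereas the paper argues directly that $\fa M\cap\fb M=IM$ would give $0:_{\frac{M}{IM}}(\fa+\fb)=0$, hence $\grad_M(\fa+\fb)>\grad_M I=\dim M$, a contradiction. Both routes are valid; yours is a clean reuse of an earlier lemma, while the paper's is self-contained and does not pass through the associated-prime disjointness statement.
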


\begin{proof}
 $ (i)\rightarrow(ii) $ is clear.

$ (ii)\rightarrow(i)$  Follows from \ref{R1} and the assumption.

$ (ii)\rightarrow(iii)$  Follows from \ref{l6} and the assumption.

 For the end, if $\fa M \cap \fb M = IM$ then $IM = IM:_M(\fa + \fb)$. Hence $0:_{\frac{M}{IM}}(\fa +\fb) = 0 $ and so $ (\fa+ \fb) \nsubseteq \Z_R(\frac{M}{IM})$. This implies that $\grad_M(\fa +\fb)> \grad_M I = \h_M I= \dim M $ and this is a contradiction.
\end{proof}
\begin{cor}\label{c6}
Let $I$ be an ideal of $R$ such that $\fa\sim_{(I;M)}\fb$ and $\Ass \frac{M}{IM} = \Min \Ass\frac{M}{IM}$. Set $t:= \grad_M I$. Then the following statements hold.
 \begin{itemize}
   \item [(i)] $ \h _M \fp = t$ and $M_{\fp}$ is Cohen-Macaulay for every $ \fp\in \Min \Ass \frac{M}{\fa M} $.
   \item [(ii)] $ \h_M \fa = \h_M \fb = \grad_M \fa = \grad_M \fb = t $. Moreover, if $\fa M \cap\fb M \neq IM$ then $\h _M (\fa + \fb )= \grad _M \fa $.
   \item [(iii)] If $\frac{M}{IM}$ is equidimensional then $ \dim M = \h _M \fp + \dim \frac{R}{\fp}$ for every $ \fp\in \Min \Ass \frac{M}{\fa M} $. Therefore $ \dim M = \h _M \fa + \dim \frac{M}{\fa M}$.
   \end{itemize}
\end{cor}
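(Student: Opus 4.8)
The plan is to deduce all three parts from one computation carried out in the localizations $M_{\fp}$ at the minimal primes of $M/\fa M$. Two preliminary observations drive everything. First, the $M$-regular sequence generating $I$ has length exactly $t=\grad_M I$: since $IM\subseteq\fa M\subsetneq M$ we have $M/IM\neq 0$ and $I\cdot(M/IM)=0$, so $I\subseteq\Z_R(M/IM)$ and the generating sequence is already a maximal $M$-regular sequence in $I$. Second — and this is where the hypothesis $\Ass\frac M{IM}=\Min\Ass\frac M{IM}$ enters — Lemma~\ref{l10}(ii) gives $\Ass\frac M{\fa M}=\Ass\frac M{IM}\cap V(\fa)$, which is contained in $\Ass\frac M{IM}=\Min\Ass\frac M{IM}$; since $\Supp\frac M{\fa M}\subseteq\Supp\frac M{IM}$, each prime of $\Ass\frac M{\fa M}$ is then a minimal prime of $\Supp\frac M{IM}$ (in particular $\Ass\frac M{\fa M}=\Min\Ass\frac M{\fa M}$), and symmetrically for $\fb$.

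For (i), fix $\fp\in\Min\Ass\frac M{\fa M}$; by the above $\fp$ is a minimal prime of $\Supp\frac M{IM}$. Localizing at $\fp$, the generators of $I$ form an $M_{\fp}$-regular sequence of length $t$ inside $\fp R_{\fp}$, and $M_{\fp}/IM_{\fp}$ has dimension $0$ because $\fp$ is minimal in $\Supp\frac M{IM}$. Since passing to the quotient by a regular sequence drops dimension by exactly its length, $\dim M_{\fp}=\dim M_{\fp}/IM_{\fp}+t=t$, that is, $\h_M\fp=t$; and as $\depth M_{\fp}\geq t=\dim M_{\fp}$, the module $M_{\fp}$ is Cohen-Macaulay.

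For (ii), the equalities $\grad_M\fa=\grad_M\fb=\grad_M I=t$ come from Lemma~\ref{l6}(i). Since the infimum defining $\h_M\fa$ is attained at a minimal prime of $\Supp\frac M{\fa M}$, part (i) yields $\h_M\fa=t$, and likewise $\h_M\fb=t$. For the last assertion, put $N:=(\fa M\cap\fb M)/IM$, which is nonzero precisely when $\fa M\cap\fb M\neq IM$; then $\Ass N\neq\emptyset$. Because $N\subseteq\fb M/IM\cong\Hom_R(R/\fa,M/IM)$ and $\Ass\Hom_R(R/\fa,M/IM)=V(\fa)\cap\Ass\frac M{IM}=\Ass\frac M{\fa M}$ by Lemma~\ref{l10}(ii), we get $\Ass N\subseteq\Ass\frac M{\fa M}$; by the symmetry of the linkage relation $N\subseteq\fa M/IM\cong\Hom_R(R/\fb,M/IM)$ as well, so $\Ass N\subseteq\Ass\frac M{\fb M}$. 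Hence a prime $\fp\in\Ass N$ satisfies $\fa+\fb\subseteq\fp$ and, by (i), $\h_M\fp=t$; therefore $\h_M(\fa+\fb)\leq t$, while $\h_M(\fa+\fb)\geq\grad_M(\fa+\fb)\geq\grad_M\fa=t$, so $\h_M(\fa+\fb)=t=\grad_M\fa$.

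For (iii), recall that $\dim\frac M{IM}=\dim M-t$, again since $IM$ is generated by an $M$-regular sequence of length $t$. If $\frac M{IM}$ is equidimensional, every minimal prime $\fp$ of $\Supp\frac M{IM}$ has $\dim R/\fp=\dim\frac M{IM}=\dim M-t$; applying this to $\fp\in\Min\Ass\frac M{\fa M}$ (a minimal prime of $\Supp\frac M{IM}$ by the first paragraph) and combining with $\h_M\fp=t$ from (i) gives $\h_M\fp+\dim R/\fp=\dim M$. Taking the maximum over such $\fp$ gives $\dim\frac M{\fa M}=\dim M-t$, so $\dim M=t+(\dim M-t)=\h_M\fa+\dim\frac M{\fa M}$ by (ii). The only point that is not routine is the inclusion $\Ass N\subseteq\Ass\frac M{\fa M}\cap\Ass\frac M{\fb M}$ in (ii): one has to exploit that $N$ embeds both in $\fb M/IM$ and, by the symmetry of linkage, in $\fa M/IM$, and then identify the associated primes of these two Hom-modules through Lemma~\ref{l10}(ii); everything else is localization and dimension bookkeeping.
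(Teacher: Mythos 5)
Your proof is correct and rests on the same core strategy as the paper's: reduce everything to a dimension count in the localizations $M_{\fp}$ at the primes $\fp\in\Min\Ass\frac{M}{\fa M}$, using that the generators of $I$ form a maximal $M$-regular sequence of length $t$ so that $\depth M_{\fp}\geq t\geq\dim M_{\fp}$. The differences are local and both are legitimate improvements in economy. In (i) you observe, via Lemma \ref{l10}(ii) (or already via the embedding of Lemma \ref{R1}) together with the hypothesis $\Ass\frac{M}{IM}=\Min\Ass\frac{M}{IM}$, that every $\fp\in\Ass\frac{M}{\fa M}$ is a minimal prime of $\Supp\frac{M}{IM}$, so $\dim\frac{M_{\fp}}{IM_{\fp}}=0$ directly; the paper instead splits into the cases $\fp\nsupseteq\fb$ and $\fp\supseteq\fb$ and invokes the Artinianness Lemma \ref{l7} in the second case — your route bypasses that lemma and the case distinction. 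In the "moreover" part of (ii) the paper uses prime avoidance on $\Z_R(\frac{M}{IM})$ to produce one prime containing $\fa+\fb$, whereas you take an associated prime of $N=(\fa M\cap\fb M)/IM$ and use the two embeddings $N\subseteq\fb M/IM\cong\Hom_R(R/\fa,\frac{M}{IM})$ and $N\subseteq\fa M/IM\cong\Hom_R(R/\fb,\frac{M}{IM})$ to land in $\Ass\frac{M}{\fa M}\cap\Ass\frac{M}{\fb M}$; this gives slightly more (a common associated prime, complementing Lemma \ref{l10}(iii)) at the cost of the standard identity $\Ass\Hom_R(R/\fa,-)=V(\fa)\cap\Ass(-)$. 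In (iii) you make the same step $\dim\frac{M}{IM}=\dim M-t$ that the paper makes, so your argument is on exactly the same footing there; otherwise the bookkeeping matches the paper's.
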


\begin{proof}
 $ (i) $ Let $\fp \in \Min\Ass \frac{M}{\fa M} $. If $\fp \nsupseteq \fb$, then $\fa M_{\fp} = IM_{\fp} : _{M_{\fp}} \fb R_{\fp} = IM_{\fp}$ and so $0 = \dim \frac{M_{\fp}}{\fa M_{\fp}} = \dim \frac{M_{\fp}}{I M_{\fp}}$. Therefore $M_{\fp}$ is Cohen-Macaulay of dimension $t$.

 In case $\fp\supseteq \fb$, $\fa R_{\fp}\sim_{(IR_{\fp};M_{\fp})}\fb R_{\fp}$ and $\Ass\frac{M_{\fp}}{IM_{\fp}} = \Min \Ass \frac{M_{\fp}}{IM_{\fp}}$. Hence, by \ref{l7}, $M_{\fp}$ is Cohen-Macaulay of dimension $t$.

$(ii)$ The first part follows from $ (i) $. Assume that $\fa M \cap \fb M \neq IM$ then, $0:_{\frac{M}{IM}}(\fa +\fb) \neq 0 $ and so $ (\fa+ \fb) \subseteq Z_R(\frac{M}{IM})$. Hence $(\fa+ \fb) \subseteq \fp$ for some $\fp \in Z_R(\frac{M}{IM})$ and so $\fp\in \Min \Ass \frac{M}{IM}$. By (i), $ \h _M (\fa + \fb )= t $.

   $ (iii) $ Let $ \fp\in \Min\Ass \frac{M}{\fa M} \subseteq \Min\Ass \frac{M}{IM} = \Assh \frac{M}{IM}$. Then $ \dim \frac{R}{\fp} = \dim \frac{M}{IM} = \dim M - t$ and, by (i), the result has desired. Also, in view of the fact that $\dim \frac{M}{\fa M} = \dim \frac{R}{\fp}$ for some $\fp \in \Min\Ass \frac{M}{\fa M}$ and $ \h _M \fa = \h _M \fp$, we have $ \dim M = \h _M \fa + \dim \frac{M}{\fa M}$.
\end{proof}

In the rest of this section we study whether linkedness of ideals over a module transfers via homomorphisms.

\begin{rem} \label{r2}
Let $R' \rightarrow R$ be a ring homomorphism and $\fa'$ and $\fb'$ be non-zero proper ideals of $R'$. Then $\fa'\sim _{(0;M)}\fb'$ if and only if $\fa' R \sim_{(0;M)}\fb' R$.
\end{rem}

In the following, we show that linkage of ideals over a module transfers, in some sense, via faithfully flat homomorphisms.

\begin{lem} \label{l12}
Let $R \rightarrow S$ be a faithfully flat ring homomorphism and $\fa\sim _{(0;M)}\fb$. Then, $\fa S \sim_{(0;M\otimes_R S)}\fb S$ up to isomorphism, i.e. $0:_{M\otimes_R S} \fa S \cong \fb (M\otimes_R S)$ and $0:_{M\otimes_R S} \fb S \cong \fa (M\otimes_R S)$. In particular, if $(R , \fm)$ is local then $\fa \hat{R} \sim_{(0;M\otimes_R \hat{R})}\fb \hat{R},$ up to isomorphism.
\end{lem}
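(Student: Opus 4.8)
The plan is to reduce everything to the elementary behaviour of Hom and colon ideals under a faithfully flat base change. First I would recall the standard fact that for a finitely generated $R$-module $N$ and an ideal $\fc$ of $R$ one has a natural isomorphism $\Hom_R(R/\fc, N) \otimes_R S \cong \Hom_S(S/\fc S, N\otimes_R S)$, which holds because $S$ is flat and $R/\fc$ is finitely presented. Translating this into colon notation, $(0:_N \fc)\otimes_R S \cong 0:_{N\otimes_R S}\fc S$. I will apply this with $N = M$ and $\fc = \fa$, and again with $\fc = \fb$.

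Next, since $\fa\sim_{(0;M)}\fb$ means precisely $0:_M\fa = \fb M$ and $0:_M\fb = \fa M$, tensoring these identities with the flat module $S$ gives $(0:_M\fa)\otimes_R S \cong (\fb M)\otimes_R S$. The left-hand side is $0:_{M\otimes_R S}\fa S$ by the previous paragraph. For the right-hand side I would use that $(\fb M)\otimes_R S$ is naturally identified with the image of $\fb M\otimes_R S \to M\otimes_R S$, which, again by flatness of $S$, equals $\fb(M\otimes_R S)$ inside $M\otimes_R S$. Combining, $0:_{M\otimes_R S}\fa S \cong \fb(M\otimes_R S)$, and symmetrically $0:_{M\otimes_R S}\fb S \cong \fa(M\otimes_R S)$. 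I should also check the side conditions in Definition \ref{F1}: one needs $\fa S\cdot(M\otimes_R S)\neq M\otimes_R S$ and likewise for $\fb S$; this follows from $\fa M\neq M$ together with faithful flatness, since $M\otimes_R S / \fa(M\otimes_R S)\cong (M/\fa M)\otimes_R S$ is nonzero whenever $M/\fa M\neq 0$. Similarly the hypothesis that $\fa,\fb$ are nonzero proper ideals survives: $\fa S\neq 0$ and $\fa S\neq S$ by faithful flatness. Finally, the ``in particular'' clause is just the special case $S=\hat R$, the $\fm$-adic completion of a Noetherian local ring $(R,\fm)$, which is faithfully flat over $R$; here $M\otimes_R\hat R\cong\hat M$.

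I do not expect a serious obstacle: every step is a formal consequence of flatness plus the finite generation of $M$ (hence finite presentation of the relevant quotients over the Noetherian ring $R$). The only point needing minor care is that the isomorphisms produced are not literal equalities of submodules of $M\otimes_R S$ but canonical isomorphisms — which is exactly why the statement is phrased ``up to isomorphism'' — so I would be careful to state the conclusion as $0:_{M\otimes_R S}\fa S\cong \fb(M\otimes_R S)$ rather than claiming the colon literally equals the extension ideal times the module, and to note that one does not in general get $I$ generated by an $(M\otimes_R S)$-regular sequence issue here because $I=0$.
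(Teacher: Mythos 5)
Your proposal is correct and follows essentially the same route as the paper: the canonical isomorphism $(0:_M\fc)\otimes_R S\cong 0:_{M\otimes_R S}\fc S$ coming from $\Hom_R(R/\fc,M)\otimes_R S\cong\Hom_S(S/\fc S,M\otimes_R S)$, plus the flatness identification of $(\fb M)\otimes_R S$ with $\fb(M\otimes_R S)$ inside $M\otimes_R S$, and the faithful-flatness check of the non-degeneracy conditions. The only cosmetic difference is that the paper verifies the composed isomorphism is the identity on submodules of $M\otimes_R S$ (getting a literal equality), while you keep everything "up to isomorphism," which is all the statement asks for.
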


\begin{proof}
 Consider the composition of natural isomorphisms
 \begin{eqnarray*}(0:_M \fa ) \otimes _R S \stackrel {\Phi}{\rightarrow} \Hom_R (\frac{R}{\fa}, M)\otimes_R S \stackrel {\Psi}{\rightarrow} \Hom_S (\frac{S}{\fa S}, M\otimes_R S) \stackrel {\Theta}{\rightarrow} 0:_{M\otimes_R S} \fa S\end{eqnarray*}
 and note that this composition is actually the identity map. So we get \begin{equation}\label{e1}
                                                                          (\fb M)\otimes_R S = 0:_{M\otimes_R S} \fa S.
                                                                        \end{equation}
   On the other hand, there are natural isomorphisms
   \begin{eqnarray*}\frac{M\otimes_R S}{\fb (M\otimes_R S)} \cong M\otimes_R S \otimes_R \frac{R}{\fb} \cong \frac{M\otimes_R S}{(\fb M) \otimes_R S}\end{eqnarray*}
   and the following commutative diagrams
         $$\begin{CD}
         &&&&&&&&\\
         \ \ &&&& 0 @>>> \fb (M\otimes_R S) @>>> M\otimes_R S @>>> {M\otimes_R S}/{\fb (M\otimes_R S)} @>>>0&  \\
          &&&&&&  @VV{h}V   @VV{\cong}V @VV{\cong}V \\
         \ \  &&&& 0 @>>> (\fb M)\otimes_R S  @>>> M\otimes_R S
 @>>> \frac{M\otimes_R S}{(\fb M)\otimes_R S} @>>>0 .&\\
        \end{CD}$$\\
         This implies that $h$ is an isomorphism. Hence, by (\ref{e1}), $0:_{M\otimes_R S} \fa S \cong \fb (M\otimes_R S)$.

         Now, the result follows from these facts that $\fa (M\otimes_R S) \neq (M\otimes_R S)\neq\fb (M\otimes_R S)$.
\end{proof}
The following lemma consider a case where linkedness of ideals over $M$ passes over $\frac{M}{xM}$ where $x$ is an $M$-regular element.
\begin{lem}\label{l14}
Let $\fa\sim _{(0;M)}\fb$ and $\Ext^1_R (\frac{R}{\fa},M) = \Ext^1_R (\frac{R}{\fb},M) = 0$. Also, assume that $x \notin \Z(M)$ and $(\fa,x)M \neq M \neq(\fb,x) M$. Then $(\fa,x) \sim _{(0;\frac{M}{xM})}(\fb,x)$, in other words, $\fa\sim _{(0;\frac{M}{xM})}\fb$.
\end{lem}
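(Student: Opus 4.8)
The plan is to show that $\fa + x R$ and $\fb + x R$ are linked by $0$ over $\overline{M} := M/xM$, that is, to verify $0 :_{\overline M} (\fa + xR) = (\fb + xR)\overline M$ and the symmetric statement. Since $x$ is $M$-regular and $(\fa, x)M \neq M \neq (\fb,x)M$, the non-triviality hypotheses in Definition \ref{F1} are satisfied, and $0$ is vacuously generated by the empty $\overline M$-regular sequence, so it suffices to establish the two colon equalities. Also note $(\fa + xR)\overline M = \fa\overline M = (\fa M + xM)/xM$, so the target submodule on the right is just the image of $\fb M$ in $\overline M$.

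First I would record the inclusion that is essentially formal: since $\fa\fb M = 0$ (because $\fa\sim_{(0;M)}\fb$ gives $\fb M \subseteq 0:_M\fa$), the image of $\fb M$ in $\overline M$ is annihilated by $\fa$, and it is visibly annihilated by $x$ as well; hence $(\fb + xR)\overline M \subseteq 0:_{\overline M}(\fa + xR)$. The work is the reverse inclusion. Take $\overline m \in \overline M$ with $(\fa + xR)\overline m = 0$, i.e. $\fa m \subseteq xM$ and (automatically) $xm \in xM$. I want to produce $m' \in \fb M$ with $m - m' \in xM$. The natural move is to look at multiplication by $x$ on the short exact sequence $0 \to 0:_M\fa \to M \to \fa M \to 0$ wait — more cleanly, apply $\Hom_R(R/\fa, -)$ to $0 \to M \xrightarrow{x} M \to \overline M \to 0$, getting the exact sequence
\[
0 \to \Hom_R(R/\fa, M) \xrightarrow{x} \Hom_R(R/\fa, M) \to \Hom_R(R/\fa, \overline M) \to \Ext^1_R(R/\fa, M) \xrightarrow{x} \Ext^1_R(R/\fa, M).
\]
By hypothesis $\Ext^1_R(R/\fa, M) = 0$, so $\Hom_R(R/\fa, \overline M) \cong \Hom_R(R/\fa, M)/x\Hom_R(R/\fa, M)$, i.e. $0:_{\overline M}\fa = (0:_M\fa + xM)/xM = (\fb M + xM)/xM$, using $\fa\sim_{(0;M)}\fb$ for the last equality. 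This identifies $0:_{\overline M}\fa$ exactly with $\fb\overline M$, which is what we need for the first colon equality (note $0:_{\overline M}(\fa + xR) = 0:_{\overline M}\fa$ since $x$ already kills $\overline M$). The symmetric equality $0:_{\overline M}(\fb + xR) = \fa\overline M$ follows by interchanging the roles of $\fa$ and $\fb$, using $\Ext^1_R(R/\fb, M) = 0$ and the other half of the linkage hypothesis.

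I expect the main (though modest) obstacle to be the bookkeeping identification of $0:_M\fa$ with $\fb M$ and its compatibility with the $\Hom/\Ext$ sequence: one must be careful that the canonical isomorphism $0:_M\fa \cong \Hom_R(R/\fa, M)$ intertwines multiplication by $x$ on both sides, so that passing to cokernels really yields $(\fb M + xM)/xM$ and not merely something isomorphic to it — this is exactly the kind of "the composition is the identity map" check made in the proof of Lemma \ref{l12}, and the same reasoning applies here. Once that is in hand the two colon equalities drop out and the parenthetical reformulation "$\fa \sim_{(0;M/xM)}\fb$" is immediate because $\fa\overline M = (\fa + xR)\overline M$ and likewise for $\fb$.
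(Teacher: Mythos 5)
Your proposal is correct and follows essentially the same route as the paper: apply $\Hom_R(\frac{R}{\fa},-)$ to $0 \to M \xrightarrow{x} M \to \frac{M}{xM} \to 0$, use the vanishing of $\Ext^1_R(\frac{R}{\fa},M)$ to identify $0:_{\frac{M}{xM}}\fa$ with the image $(\fb M + xM)/xM = \fb\,\frac{M}{xM}$ via natural maps whose composition is the identity, and then symmetrize and invoke Remark \ref{r2}. The naturality bookkeeping you flag is exactly the check the paper performs, so no gap remains.
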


\begin{proof}
Applying $\Hom (\frac{R}{\fa},-)$ on $$ 0 \rightarrow M \overset{.x}{\rightarrow} M \rightarrow \frac{M}{xM} \rightarrow 0$$ and using the assumptions, we get the following commutative diagrams with exact rows
     $$\begin{CD}
         &&&&&&&&\\
         \ \ &&&& 0 @>>> \Hom _R(\frac{R}{\fa} , M) @>{.x}>> \Hom _R(\frac{R}{\fa} , M) @>>> \Hom _R(\frac{R}{\fa} , \frac{M}{xM}) @>>>0&  \\
          &&&&&&  @VV{\cong}V   @VV{\cong}V \\
         \ \  &&&& 0 @>>> \fb M @>{.x}>> \fb M
 @>>> \frac{\fb M}{x \fb M} @>>>0.&\\
        \end{CD}$$\\
        This implies that $\Hom _R(\frac{R}{\fa} , \frac{M}{xM}) \cong\frac{\fb M}{x \fb M}$. Considering the composition of natural isomorphisms $$0:_{\frac{M}{xM}} \fa \rightarrow \Hom _R(\frac{R}{\fa} ,\frac{M}{xM})\rightarrow\frac{\fb M}{x \fb M}\rightarrow \frac{\fb M + xM}{xM}$$ and using the fact that this composition is actually the identity map, we get $0:_{\frac{M}{xM}} \fa= \frac{\fb M + xM}{xM} = \fb (\frac{M}{xM})$. Now, the result follows using \ref{r2}.
\end{proof}

   \section{linked ideals over a ring and its canonical module}
 Thoroughout this section, we assume that $(R,\fm)$ is a Cohen-Macaulay local ring with the canonical module $w_R$. The main goal of this section is to study the ideals which are linked over the canonical module.

  More precisely, we study whether linkedness of two ideals over $w_R$ implies linkedness over $R$ and vice versa.

  In spite of Example \ref{E3}, there are some cases where linkage of ideals over an $R$-module ends to linkedness of them over $R$ as the following example shows.
 \begin{exam}\label{E4}
Assume that $\fa$ and $\fb$ are radicals, $M$ is faithful and $\fa\sim _{(0;M)}\fb$. Then $\fa\sim _{(0;R)}\fb$. Indeed, by the assumption, $\fa \fb = 0$. Hence, $\fb \subseteq 0 :_R \fa$. On the other hand, if $r \in 0:_R\fa$ then, by the assumption, $rM \subseteq \fb M $. Therefore, in view of \cite[2.1]{M}, there exist $n \in \mathbb{N}$ and $b_1 ,..., b_n \in \fb$ such that $(r^n + r^ {n-1}b_1 + ...+ b_n)M= 0$. This implies that $r \in \sqrt{\fb}=\fb $. Therefore, $0 :_R \fa =\fb$ and $\fa\sim _{(0;R)}\fb$.
\end{exam}

\begin{lem} \label {l3-1}
Let $S$ be a Gorenstain local ring, $\fc$ be a pure heigth ideal of $S$, namely $\h_S\fp = \h_S\fc$ where $\fp\in \Ass_S\frac{S}{\fc}$, and $\h_S \fc= 0$. Then, $0:_S 0:_S \fc = \fc$.
\end{lem}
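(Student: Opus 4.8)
The plan is to exploit the fact that a Gorenstein local ring of dimension zero (which is the relevant case here, since $\h_S\fc = 0$ and $\fc$ has pure height $0$) is an injective module over itself, so that the functor $(-)^\vee := \Hom_S(-,S)$ is exact and is a faithful duality on finitely generated modules, with $M^{\vee\vee}\cong M$ naturally. Wait — more carefully, $S$ need not be Artinian, only that $\h_S\fc = 0$; but purity of $\fc$ forces every associated prime of $S/\fc$ to be a minimal prime of $S$, and since $S$ is Gorenstein it is equidimensional, so $\V(\fc)$ consists of (some of the) minimal primes of $S$. First I would reduce to the situation that matters: localize, or pass to the quotient, so that the colon computation $0:_S 0:_S\fc = \fc$ can be checked after tensoring with $S_\fp$ for $\fp$ ranging over a suitable finite set of primes, thereby reducing to the Artinian (dimension $0$) Gorenstein case.

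In the Artinian Gorenstein case, the key identity is Matlis duality: for an ideal $\fc\subseteq S$ one has $0:_S\fc = (S/\fc)^\vee$ inside $S = S^\vee$, and therefore $0:_S(0:_S\fc) = (0:_S\fc)^\vee$'s annihilator, which unwinds via the double-dual isomorphism to give $0:_S 0:_S\fc = \fc$ whenever $S/\fc$ is a faithful $(S/\fc)$-module in the appropriate sense — which it is, automatically. Concretely: apply $\Hom_S(-,S)$ to the exact sequence $0\to\fc\to S\to S/\fc\to 0$ to get $0\to (S/\fc)^\vee\to S\to \fc^\vee\to 0$ (using $\mathrm{Ext}^1_S(S/\fc,S)=0$, which holds because $S/\fc$ is Cohen--Macaulay of the same dimension as $S$ — here dimension $0$ — over the Gorenstein ring $S$; this is where purity/unmixedness of $\fc$ is used), which identifies $0:_S\fc = (S/\fc)^\vee$ with the kernel, i.e. with $\mathrm{Ann}_S\big(\fc^\vee\big)$-type data. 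Dualizing once more and using $(-)^{\vee\vee}\cong\mathrm{id}$ returns the sequence $0\to (\fc^\vee)^\vee\to S\to (S/\fc)^{\vee\vee}\to 0$, i.e. $0\to\fc\to S\to S/\fc\to 0$ again, and comparing the submodules $\fc$ and $0:_S(0:_S\fc)$ of $S$ gives the desired equality.

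The step I expect to be the genuine obstacle is the vanishing $\mathrm{Ext}^1_S(S/\fc,S) = 0$ (and more generally that $(S/\fc)^{\vee\vee}\to S/\fc$ is an isomorphism), since this is precisely the point at which the pure-height hypothesis on $\fc$ must be converted into homological information. Over a Gorenstein local ring $S$ of dimension $d$, a module is maximal Cohen--Macaulay iff $\mathrm{Ext}^i_S(-,S)=0$ for $i>0$; when $d=0$ this is automatic for every module, so in the reduced Artinian case there is nothing to prove. For $d>0$ the reduction to the Artinian case must be done carefully: I would argue that $0:_S 0:_S\fc$ and $\fc$ agree after localizing at each minimal prime of $S$ (where $S_\fp$ is Artinian Gorenstein), and then invoke that both ideals are unmixed of height $0$ — hence have no embedded primes and are determined by their localizations at the minimal primes in $\V(\fc)$ — to conclude they are equal globally. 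The one subtlety to handle is that $0:_S\fc$ itself is unmixed: this follows because $0:_S\fc\cong\Hom_S(S/\fc,S)$ and $\mathrm{Ass}_S\Hom_S(S/\fc,S) = \mathrm{Supp}(S/\fc)\cap\mathrm{Ass}_S S = \V(\fc)\cap\mathrm{Min}(S)$, using that $S$ is Gorenstein (so $\mathrm{Ass}_S S = \mathrm{Min}(S)$). With that in hand, the localize-and-double-dualize argument closes the proof.
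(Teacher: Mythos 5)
Your core strategy is the same as the paper's: use pure height $0$ to see that every relevant prime is a height-zero (hence minimal) prime, localize there so that $S_\fp$ is an Artinian Gorenstein ring, and invoke the standard fact that in such a ring the double annihilator returns the ideal (your $\Hom_S(-,S)$ double-duality sketch is an acceptable proof of that fact; $\Ext^1$-vanishing is indeed automatic in dimension zero, and colons/annihilators of finitely generated modules commute with localization). The gap is in the globalization. Your plan is that $\fc$ and $\mathfrak{d}:=0:_S(0:_S\fc)$ are both unmixed ideals, hence have no embedded primary components and are determined by their localizations at the minimal primes in $\V(\fc)$. But the ``subtlety'' you actually verify, namely $\Ass_S\Hom_S(S/\fc,S)=\Supp(S/\fc)\cap\Ass_S S$, is a statement about the associated primes of the \emph{module} $0:_S\fc$; what your argument needs is $\Ass_S(S/\mathfrak{d})\subseteq\Min S$, i.e. unmixedness of the \emph{ideal} $\mathfrak{d}$, and that is a different assertion which you never supply. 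It is true (for instance $S/\mathfrak{d}=S/\Ann_S(0:_S\fc)$ embeds into $\Hom_S(0:_S\fc,0:_S\fc)$, hence into a finite direct sum of copies of $0:_S\fc$, so its associated primes lie among those of $0:_S\fc$), but some such bridge must be stated; as written the step is a conflation, not a proof.

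Moreover, the whole ``determined by localizations at the minimal primes'' machinery is unnecessary, and this is exactly where the paper's proof is leaner: since $\fc\subseteq\mathfrak{d}$ automatically, if $\mathfrak{d}\neq\fc$ choose $\fp\in\Ass_S(\mathfrak{d}/\fc)$; because $\mathfrak{d}/\fc\subseteq S/\fc$ one gets $\fp\in\Ass_S(S/\fc)$, so $\h_S\fp=0$ by purity, $S_\fp$ is Artinian Gorenstein, and the local double-annihilator identity gives $(\mathfrak{d}/\fc)_\fp=0$, contradicting $\fp\in\Ass_S(\mathfrak{d}/\fc)$. Replacing your globalization paragraph by this observation closes the gap with no new ingredients (only unmixedness of $\fc$, which is the hypothesis, is ever used); alternatively keep your route but prove $\Ass_S(S/\mathfrak{d})\subseteq\Min S$ as indicated. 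A small slip to correct in your opening: it is the associated (equivalently minimal) primes of $S/\fc$, not all of $\V(\fc)$, that are minimal primes of $S$, and no appeal to equidimensionality of $S$ is needed, since height zero already means minimal.
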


\begin{proof}
    Assume to the contrary that $0:_S 0:_S\fc \supsetneq \fc$. Then, there exists $\fp \in \Ass_S\frac{0:_S 0:_S\fc}{\fc}$. Therefore, $\fp \in \Ass_S \frac{S}{\fc}$ and $\fp S_{\fp} \in \Ass_{S_{\fp}} \frac{S_{\fp}}{\fc S_{\fp}}$. On the other hand, $S_{\fp}$ is a Gorenstain ring of zero dimension. Hence $0:_{S_{\fp}} 0:_{S_{\fp}}\fc S_{\fp} = \fc S_{\fp}$ and so $(\frac{0:_S 0:_S\fc}{\fc})_{\fp} = 0$, which is a contradiction.
\end{proof}

 In the following theorem, we consider a case where linking over the canonical module implies linking over $R$.
\begin{thm}\label{t2}
Let $I \subseteq \fa \cap \fb$ be an ideal of $R$ which is generated by an $w_R$-regular sequence. Also,
 assume that $Iw_R :_{w_R}\fb = \fa w_R \neq Iw_ R$ and $\frac{R}{\fb}$ is unmixed. Then $\fb = I:_R \fa$.
 In other words, if $\fa \sim _{(I;w_R)} \fb$ and $\frac{R}{\fa}$ and $\frac{R}{\fb}$ are unmixed, then $\fa\sim _{(I;R)}\fb.$
\end{thm}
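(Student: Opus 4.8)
The first step is to reduce to the situation of Lemma \ref{l3-1} by passing to a Gorenstein ring. Since $(R,\fm)$ is Cohen-Macaulay with canonical module $w_R$, and $I$ is generated by an $w_R$-regular sequence $\underline{x} = x_1,\dots,x_t$ of length $t = \grad_{w_R} I$, the element sequence $\underline{x}$ is also part of a system of parameters; one should check that $R/I$ is again Cohen-Macaulay (using that $w_R/Iw_R$ is the canonical module of $R/I$ and has dimension $\dim R - t$, while $\h_R I = t$ by Lemma \ref{l6}(i) applied over $w_R$ together with the hypotheses). Working modulo $I$, we may therefore assume $I = 0$, that $R$ is Cohen-Macaulay of dimension $d$ with canonical module $w_R$, that $0 :_{w_R} \fb = \fa w_R \neq 0$, and that $R/\fa$ and $R/\fb$ are unmixed; the goal is to show $0 :_R \fa = \fb$.

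Next I would localize to reduce the dimension. Fix $\fp \in \Ass_R(R/\fb)$; by unmixedness $\h_R \fp = \h_R \fb =: c$, and by Corollary \ref{c6}(ii) (applied over $w_R$, whose associated primes form an unmixed set since $R$ is Cohen-Macaulay) one gets $\h_R \fa = \h_R \fb = c$ as well. The key point is that it suffices to prove the equality $0 :_R \fa = \fb$ after localizing at every $\fp \in \Ass_R(R/\fb) \cup \Ass_R(R/(0:_R\fa))$, because both $\fb$ and $0:_R\fa$ are unmixed ideals of the same height — for $\fb$ this is a hypothesis, and for $0:_R\fa$ one shows unmixedness from the fact that $0:_R \fa$ embeds in a direct sum of copies of $R$ via a presentation argument parallel to Lemma \ref{R1}, or more directly because $\Ass_R(0:_R\fa) \subseteq \Ass_R(w_R/0:_{w_R}0:_{w_R}\fa)$-type reasoning pins its associated primes to $\Ass(R/\fa)$. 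Over the local Gorenstein ring $R_\fp$ (it is Gorenstein because $(w_R)_\fp$ is its canonical module and is free of rank one once $\fp$ has height $c = \h\fa = \h\fb$, where $w_R$ localizes correctly), the ideal $\fa R_\fp$ is a pure-height-zero ideal, so Lemma \ref{l3-1} gives $0 :_{R_\fp} 0 :_{R_\fp} \fa R_\fp = \fa R_\fp$.

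Now I would run the standard linkage identity localized: from $0 :_{w_R} \fb = \fa w_R$ one deduces, after localizing at such a $\fp$ where $(w_R)_\fp \cong R_\fp$, that $0 :_{R_\fp} \fb R_\fp = \fa R_\fp$, hence $\fb R_\fp \subseteq 0 :_{R_\fp} \fa R_\fp$, and applying $0:_{R_\fp}(-)$ together with the double-annihilator equality of Lemma \ref{l3-1} forces $0 :_{R_\fp} \fa R_\fp = \fb R_\fp$. Since this holds at every associated prime of the two unmixed ideals $0:_R\fa$ and $\fb$, and an unmixed ideal is determined by its localizations at its associated primes (an element lies in the ideal iff it does locally at each associated prime), we conclude $0 :_R \fa = \fb$ globally. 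The final sentence of the theorem then follows by symmetry: the hypothesis "$\fa \sim_{(I;w_R)}\fb$" is symmetric in $\fa,\fb$, so the same argument with the roles exchanged gives $\fa = I :_R \fb$, whence $\fa \sim_{(I;R)} \fb$.

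**Main obstacle.** The delicate point is \emph{not} the double-annihilator step (that is handed to us by Lemma \ref{l3-1}) but rather justifying that the localization $(w_R)_\fp$ is free of rank one over $R_\fp$ for the relevant primes $\fp$, and — more importantly — establishing that $0 :_R \fa$ is unmixed of height $c$ so that checking the equality at associated primes is legitimate. Controlling $\Ass_R(0:_R\fa)$ requires care: one wants to show it equals $\Ass_R(R/\fa)$ (or at least is contained in the height-$c$ primes), which should follow by combining Lemma \ref{R1}-style embeddings over $w_R$ with the hypothesis that $R/\fa$ is unmixed, but the bookkeeping between annihilators over $R$ and over $w_R$ is where the argument must be written carefully.
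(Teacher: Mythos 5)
There is a genuine gap at the heart of your local step. After reducing to $I=0$ you localize at $\fp \in \Ass_R(R/\fb)$ and assert that $(w_R)_\fp \cong R_\fp$, i.e.\ that the canonical module is free of rank one there ``once $\fp$ has height $c$''; this amounts to claiming that $R_\fp$ is Gorenstein, which does not follow from $R$ being merely Cohen--Macaulay and is false in general. Everything downstream depends on this: you then apply Lemma \ref{l3-1} to $R_\fp$, but that lemma requires a Gorenstein ring, and the double-annihilator identity genuinely fails over non-Gorenstein Artinian local rings (in $k[x,y]/(x^2,xy,y^2)$ one has $0:_S(x)=(x,y)$ and $0:_S(x,y)=(x,y)\neq (x)$, even though $(x)$ is pure height zero). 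So the proof as written does not go through. Note that at such a $\fp$ (a minimal prime after the reduction, so $R_\fp$ is Artinian) the correct identification is $(w_R)_\fp \cong w_{R_\fp}\cong E_{R_\fp}(R_\fp/\fp R_\fp)$, and one could finish locally by Matlis duality --- this is essentially how the paper argues in Theorem \ref{t3} --- but that is a different argument from the one you wrote. The paper's own proof of Theorem \ref{t2} avoids localizing $w_R$ altogether: it presents $R=S/J$ with $S$ Gorenstein, identifies $\frac{w_R}{Iw_R}$ with $\Hom_{\frac{S}{\fx S}}(\frac{S}{I'},\frac{S}{\fx S})$ for an $S$-regular sequence $\fx\subseteq I'$ of length $\h_R I$, converts the colon modules $Iw_R:_{w_R}\fb$ and $Iw_R:_{w_R}(I:_R\fa)$ into annihilators inside the Gorenstein ring $S/\fx S$, and only then invokes Lemma \ref{l3-1} legitimately, for the pure-height-zero image of $\fb'$.

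A second, smaller point: the unmixedness of $0:_R\fa$ that you single out as the ``main obstacle'' is never proved in your proposal, and in fact it is not needed. Since $\Ann_R w_R=0$, the hypothesis $Iw_R:_{w_R}\fb=\fa w_R$ gives (after the reduction) $\fa\fb=0$, hence $\fb\subseteq 0:_R\fa$; then $(0:_R\fa)/\fb$ embeds in $R/\fb$, so its associated primes lie in $\Ass_R(R/\fb)$, and it suffices to verify the equality locally at those primes only, where the stated hypothesis that $R/\fb$ is unmixed controls the height. With that simplification, and with the Matlis-duality repair of the local step indicated above, your localization strategy can be salvaged; but as submitted, the key assertion that $R_\fp$ is Gorenstein is unjustified and the argument collapses there.
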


\begin{proof}
     There exists a Gorenstain local ring $S$ and an ideal $J$ of $S$ such that $R\cong \frac{S}{J}$. Let $\fa', \fb'$ and $I'$ be ideals of $S$ such that $\fa = \frac{\fa'}{J}$, $\fb = \frac{\fb'}{J}$ and $I = \frac{I'}{J}.$

         By the assumption, $\fa \fb(\frac{w_R}{Iw_R}) = 0$. Also, in view of \cite[3.3.11(c) and 3.3.5(a)]{BH}, $\Ann_{\frac{R}{I}} (\frac{w_R}{Iw_R}) = 0 $. Hence, $\fb \subseteq I:_R \fa$. Therefore, $\fa w_R \subseteq Iw_R :_{w_R}(I:_R\fa) \subseteq Iw_R :_{w_R}\fb = \fa w_R$ and so \begin{equation}\label{e} Iw_R :_{w_R}(I:_R\fa) = Iw_R :_{w_R}\fb .\end{equation}
         Let $\h_R I= t$. We may assume that $\dim R = \dim S$. In other words, there exists an $S$-regular sequence $\fx$ of length $t$ in $I'$. Therefore, by \cite[3.3.7 ]{BH},
\begin{eqnarray*}\frac{w_R}{Iw_R}\cong w_{\frac{R}{I}} \cong \Ext^t_S (\frac{R}{I} , S) \cong \Ext^t_S (\frac{S}{I'} , S) \cong \Hom_{\frac{S}{\fx S}} (\frac{S}{I'} , \frac{S}{\fx S}).\end{eqnarray*}
         Let $I \subseteq \fc$ be an ideal of $R$ and $ \fc'\lhd S$ such that $\fc = \frac{\fc'}{J}$. Then the following natural isomorphisms exist:
\begin{eqnarray*}
0:_{\frac{w_R}{Iw_R}} \fc &\cong&  \Hom_{\frac{R}{I}} (\frac{R}{\fc} , \frac{w_R}{Iw_R})\cong \Hom_{\frac{R}{I}} (\frac{R}{\fc} , \Hom_{\frac{S}{\fx S}} (\frac{S}{I'} , \frac{S}{\fx S})) \\
&\cong& \Hom_{\frac{S}{\fx S}} (\frac{R}{\fc} \bigotimes _{\frac{R}{I}} \frac{R}{I},\frac{S}{\fx S}) \cong \Hom_{\frac{S}{\fx S}} (\frac{R}{\fc} ,\frac{S}{\fx S}) \cong \Hom_{\frac{S}{\fx S}} (\frac{S}{\fc'} ,\frac{S}{\fx S})\cong 0:_{\frac{S}{\fx S}} \fc'.
\end{eqnarray*}
         Therefore, $0:_{\frac{w_R}{Iw_R}} \fb \cong 0:_{\frac{S}{\fx S}} \fb'$ and $0:_{\frac{w_R}{Iw_R}} (I:_R \fa) \cong 0:_{\frac{S}{\fx S}} (I':_S \fa')$. This, in conjunction with (\ref{e}), implies that $0:_{\frac{S}{\fx S}} \fb' \cong 0:_{\frac{S}{\fx S}} (I':_S \fa')$. Also, it is straightforward to see that the combination of these isomorphisms is actually the identity map. In other words, \begin{equation}\label{equ2} 0:_{\frac{S}{\fx S}} \fb' = 0:_{\frac{S}{\fx S}} (I':_S \fa').\end{equation}
         In view of lemma \ref{l3-1} and (\ref{equ2}), $\fb' = I':_S \fa'$ and this, in conjunction with \ref{r2}, shows that $\fb = I:_R \fa.$

          Now the fact that every $w_R$-regular sequence is an $R$-regular sequence, completes the proof.
\end{proof}

In the following theorem, we consider a case where linking over $R$ implies linking over the canonical module $w_R$.

\begin{thm}\label{t3}
Let $I(\subseteq \fa \cap \fb)$ be an ideal of $R$ such that $\fa \neq I$, $\fb\neq I$ and $I$ is
generated by an $R$-regular sequence. Also, assume that $I :_R\fb = \fa$ and $\frac{w_R}{\fb w_R}$ is unmixed.
Then $\fb w_R = Iw_R:_{w_R} \fa$. In other words, if $\fa \sim_{(I;R)} \fb$ and $\frac{w_R}{\fa w_R}$ and $\frac{w_R}{\fb w_R}$
are unmixed, then $\fa \sim _{(I;w_R)} \fb$.
\end{thm}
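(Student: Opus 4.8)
The plan is to reduce the statement to the vanishing of a single module and to detect that vanishing at associated primes. The inclusion $\fb w_R \subseteq Iw_R :_{w_R}\fa$ comes for free, since $\fa = I :_R\fb$ forces $\fa\fb \subseteq I$ and hence $\fa(\fb w_R)\subseteq Iw_R$; so the whole content is the reverse inclusion, which I would phrase as: the finitely generated module $Q := (Iw_R :_{w_R}\fa)/\fb w_R$ is zero. Before attacking this I would record the standard facts that, $I$ being generated by an $R$-regular sequence over the Cohen--Macaulay ring $R$, the ideal $I$ is perfect: $R/I$ is Cohen--Macaulay, $I$ is unmixed of height $t := \h I = \grad I$, and (as every $R$-regular sequence is $w_R$-regular, $w_R$ being maximal Cohen--Macaulay) $w_R/Iw_R \cong w_{R/I}$. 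In particular, for any prime $\fp \supseteq I$ with $\h\fp = t$ the ring $R_\fp/IR_\fp$ is artinian and $(w_R)_\fp/I(w_R)_\fp$ is its canonical (i.e. Matlis-dualizing) module.

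Next I would pin down the heights. The hypothesis $\fa = I :_R\fb \neq I$ forces $\h\fb = t$: we have $\h\fb \ge t$ because $\fb \supseteq I$, and if $\h\fb > t$ then $\fb$ lies in no associated prime of $R/I$ (these all have height $t$), hence contains a nonzerodivisor $c$ on $R/I$, whence $\fa = I :_R \fb \subseteq I :_R (c) = I$, a contradiction. Since $R$ is Cohen--Macaulay and $w_R$ is faithful, this gives $\dim(w_R/\fb w_R) = \dim(R/\fb) = \dim R - t$, so ``$w_R/\fb w_R$ unmixed'' says precisely that every $\fp \in \Ass_R(w_R/\fb w_R)$ has height $t$. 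As $Q$ is a submodule of $w_R/\fb w_R$, this bounds $\Ass_R Q$: every associated prime of $Q$ contains $\fb \supseteq I$ and has height $t$.

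The heart of the argument is then to check $Q_\fp = 0$ for every prime $\fp \supseteq I$ with $\h\fp = t$. If $\fa \nsubseteq \fp$, then $\fa R_\fp = R_\fp$, so $(Iw_R :_{w_R}\fa)_\fp = I(w_R)_\fp \subseteq \fb(w_R)_\fp$ and $Q_\fp = 0$. If $\fa \subseteq \fp$, I would localize, set $A := R_\fp/IR_\fp$ and let $E$ be its Matlis-dualizing module $(w_R)_\fp/I(w_R)_\fp$; writing bars for images in $R/I$ and using that the colon commutes with localization, reduction modulo $I(w_R)_\fp$ identifies $(Iw_R :_{w_R}\fa)_\fp$ with $0 :_E \bar{\fa}_\fp$ and $\fb(w_R)_\fp$ with $\bar{\fb}_\fp E$, where $\bar{\fa}_\fp = 0 :_A \bar{\fb}_\fp$. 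Matlis duality over the artinian ring $A$ then yields $0 :_E (0 :_A \bar{\fb}_\fp) = \bar{\fb}_\fp E$ (both are submodules of $E$, one inside the other, with common Matlis dual $A/(0 :_A \bar{\fb}_\fp)$, hence equal), so $Q_\fp = 0$ here too. Combining with the previous paragraph, $Q$ is a finitely generated module all of whose associated primes have height $t$ yet which vanishes at every such prime, so $\Ass_R Q = \emptyset$ and $Q = 0$; this is $\fb w_R = Iw_R :_{w_R}\fa$. The displayed reformulation follows by applying the statement to $(\fa,\fb)$ and to $(\fb,\fa)$, noting that $\fa \neq I \neq \fb$ gives $\fa w_R \neq Iw_R \neq \fb w_R$ and that an $R$-regular sequence is $w_R$-regular.

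I expect the genuine obstacle to be the final implication ``$Q_\fp = 0$ for all height-$t$ primes $\Rightarrow Q = 0$'': this is exactly where unmixedness of $w_R/\fb w_R$ is indispensable, since without it $w_R/\fb w_R$ (hence $Q$) could acquire embedded primes of smaller dimension that the local computation cannot see. A more routine but fiddly point is checking that the chain of identifications — localization, the fact that $(w_R)_\fp/I(w_R)_\fp$ is the dualizing module of $R_\fp/IR_\fp$, and the colon formulas — is genuinely compatible and acts as the identity on the underlying modules, in the bookkeeping style already used in the proof of Theorem \ref{t2}. Alternatively, one could route the whole argument through a Gorenstein presentation $R \cong S/J$ as in Theorem \ref{t2}, reducing to an ideal equality in $S/\underline{x}S$ and invoking Lemma \ref{l3-1}, but the unmixedness hypothesis would still have to enter at the corresponding last step.
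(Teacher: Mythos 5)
Your proposal is correct and follows essentially the same route as the paper's proof: both reduce the claim to vanishing at the associated primes of $\frac{w_R}{\fb w_R}$, use unmixedness (together with $\h_R\fb=\h_R I$) to force those primes to have height $t$, and then conclude at such a prime $\fp$ by identifying the localized canonical module with the Matlis-dualizing module of the artinian ring $R_{\fp}/IR_{\fp}$ and invoking a Matlis-duality colon identity. The only cosmetic differences are that the paper first passes to $R/I$ and proves $\fb E=0:_E\fa$ over $R$ with $E=E_R(R/\fp)$ via an explicit dualized exact sequence, whereas you localize first and use the length argument $0:_E(0:_A\fc)=\fc E$ over the artinian local ring.
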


\begin{proof}

         As $0 :_{\frac{R}{I}}\frac{\fb}{I} = \frac{\fa}{I} \neq 0$ and $I$ is generated by an $R$-regular sequence, $\h_{w_R} \fb = \h_R \fb = \grad_R \fb = \grad_R I = \h_R I = \h_{w_R} I $. Now, using \cite[3.3.5(a)]{BH}, it is straightforward to see that $\frac{w_R}{\fb w_R}$ is unmixed as an $\frac{R}{I}$-module, too.

         Therefore, replacing $R$ with $\frac{R}{I}$, we may assume that $I = 0$. As $\fa \fb = 0 $, $\fb w_R \subseteq 0:_{w_R} \fa$. Assume to the contrary that $0:_{w_R}\fa\neq \fb w_R$. Then there exists $\fp \in \Ass_R \frac{0:{w_R} \fa}{\fb w_R} \subseteq \Ass_R \frac{w_R}{\fb w_R}$.

         Let $E := E_R (\frac{R}{\fp})$ and $\D(-): = \Hom_R(-,E)$. We show that $\fb E = 0:_E \fa$. Since $\fb E \subseteq 0:_E \fa$, there is a natural monomorphim $h : \fb E \rightarrow \D(\frac{R}{\fa})$. Also, using \cite[10.2.16]{BS}, there are the following natural isomorphics
  $$\frac{E}{\fb E} \cong \frac{R}{\fb} \otimes_R \Hom_R(R,E) \cong \Hom_R(\Hom_R(\frac{R}{\fb},R),E) \cong \Hom_R(\fa,E) = \D (\fa).$$

         Now, applying $\D (-)$ to the exact sequence $ 0 \rightarrow \fa \overset{i}\rightarrow R \overset{\pi}\rightarrow \frac{R}{\fa} \rightarrow 0$, we get the following commutative diagrams
         $$\begin{CD}
         &&&&&&&&\\
         \ \ &&&& 0 @>>> \fb E @>>> E @>>> {E}/{\fb E} @>>>0&  \\
          &&&&&&  @VV{h}V   @VV{\cong}V @VV{\cong}V \\
         \ \  &&&& 0 @>>> \D (\frac{R}{\fa}) @>{\D(\pi)}>> \D(R)
 @>{\D(i)}>>\D(\fa) @>>>0&.\\
        \end{CD}$$\\
         This implies that $h$ is an isomorphism. Therefore, the combination $\fb E \overset{h}\rightarrow \D(\frac{R}{\fa}) \rightarrow 0:_E \fa$, which is the inclusion map, is an isomorphism. Hence $\fb E = 0:_E \fa$.

         On the other hand, by the assumption, $\dim \frac{R}{\fp} = \dim \frac{w_R}{\fb w_R} = \dim \frac{R}{\fb} = \dim R$. Therefore, $0 = \dim R_{\fp} = \dim (w_R)_{\fp}$. This implies that, $(w_R)_{\fp} \cong w_{R_{\fp}}\cong E_{R_{\fp}} (\frac{R_{\fp}}{\fp R_{\fp}})$. Therefore, $\fb (w_R)_{\fp} = 0:_{(w_R)_{\fp}} \fa$ and so, $(\frac{ 0:_{w_R}\fa}{\fb w_R})_{\fp} = 0$ which is a contradiction. Hence $\fb w_R = 0:{w_R} \fa$.

    Now, the result follows from the fact that every $R$-regular sequence is an $w_R$-regular sequence too.
\end{proof}


   \section{Some generalizations of a theorem of Peskine and Szpiro}

In this section, we consider some generalizations of the basic result of Peskine and Szpiro \cite[prop 1.3]{PS}, namely if $R$ is Gorenstain, $\fa \neq 0$ and $\fb := 0:_R \fa$ then $\frac{R}{\fa}$ is Cohen-Macaulay if and only if $\frac{R}{\fa}$ is unmixed and $\frac{R}{\fb}$ is Cohen-Macaulay.
\begin{thm}\label{p3}
Let $(R, \fm)$ be a Cohen-Macaulay local ring with the canonical module $w_R$, $I(\subseteq \fa \cap \fb)$ be an ideal of $R$ generating by an $R$-regular sequence such that $\fa \neq I$, $\fb\neq I$ and $I :_R\fa = \fb$. Then the following are equivalent:
\begin{itemize}
     \item  [(i)]  $\frac{w_R}{\fa w_R}$ is Cohen-Macaulay.
     \item  [(ii)] $\frac{w_R}{\fa w_R}$ is unmixed and $\frac{R}{\fb}$ is Cohen-Macaulay.
  \end{itemize}
\end{thm}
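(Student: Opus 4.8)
The plan is to reduce to $I=0$ and then run a biduality argument with respect to the canonical module. First, since $R$ is Cohen-Macaulay and $I$ is generated by an $R$-regular sequence, which (as $w_R$ is maximal Cohen-Macaulay) is also $w_R$-regular, $R/I$ is Cohen-Macaulay with canonical module $w_{R/I}\cong w_R/Iw_R$ by \cite[3.3.5]{BH}; moreover $(w_R/Iw_R)/\overline{\fa}(w_R/Iw_R)=w_R/\fa w_R$, $(R/I)/\overline{\fb}=R/\fb$, and $\overline{\fb}=0:_{R/I}\overline{\fa}$. So we may assume $I=0$, i.e. $\fb=0:_R\fa$. If $\fa$ (resp. $\fb$) had positive height it would contain a nonzerodivisor, and then $\fb$ (resp. $\fa$) would be $0$; hence $\h_R\fa=\h_R\fb=0$ and, $R$ being equidimensional, $R/\fa$, $R/\fb$ and $N:=w_R/\fa w_R$ all have dimension $d:=\dim R$. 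In particular, for each of these modules ``Cohen-Macaulay'' is equivalent, by local duality over $R$ (which one checks after the faithfully flat base change $-\otimes_R\widehat R$), to the vanishing $\Ext^j_R(-,w_R)=0$ for all $j>0$.

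Next I would assemble the standard facts about the functor $(-)^{\dagger}:=\Hom_R(-,w_R)$: it restricts to an exact duality on maximal Cohen-Macaulay $R$-modules, so each such $M$ is $w_R$-reflexive and $M^{\dagger}$ is again maximal Cohen-Macaulay; and a module $M$ with $\dim M=d$ is unmixed if and only if the natural map $M\to M^{\dagger\dagger}$ is injective (in the Gorenstein case this is precisely Lemma \ref{l3-1}, and in general one deduces it by writing $R\cong S/J$ with $S$ Gorenstein and transporting \ref{l3-1}). I would also record the natural isomorphism $N^{\dagger}=\Hom_R(w_R/\fa w_R,w_R)\cong\Hom_R(R/\fa,\Hom_R(w_R,w_R))=0:_R\fa=\fb$; applying $(-)^{\dagger}$ to the quotient map $w_R\twoheadrightarrow N$ recovers the inclusion $\fb\hookrightarrow R$, and applying $(-)^{\dagger}$ once more yields a map $w_R\to N^{\dagger\dagger}$ which, by naturality of biduality (and since $w_R$ is reflexive), equals the composite of $w_R\twoheadrightarrow N$ with the biduality map $N\to N^{\dagger\dagger}$.

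The third step applies $(-)^{\dagger}$ to $0\to\fb\to R\to R/\fb\to0$, yielding $\Ext^{i}_R(\fb,w_R)\cong\Ext^{i+1}_R(R/\fb,w_R)$ for $i\ge1$ together with the four-term exact sequence $0\to\Hom_R(R/\fb,w_R)\to w_R\to\Hom_R(\fb,w_R)\to\Ext^1_R(R/\fb,w_R)\to0$, in which the middle map is the one identified above. For (i)$\Rightarrow$(ii): if $N$ is Cohen-Macaulay then it is unmixed (the first half of (ii)) and $\fb=N^{\dagger}$ is maximal Cohen-Macaulay, so $\Ext^{>0}_R(\fb,w_R)=0$; since $N$ is then $w_R$-reflexive the middle map $w_R\to\Hom_R(\fb,w_R)$ is surjective, so $\Ext^1_R(R/\fb,w_R)=0$ as well; hence $\Ext^{>0}_R(R/\fb,w_R)=0$ and $R/\fb$ is Cohen-Macaulay. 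For (ii)$\Rightarrow$(i): $R/\fb$ Cohen-Macaulay gives $\Ext^{>0}_R(R/\fb,w_R)=0$, hence $\Ext^{>0}_R(\fb,w_R)=0$ (so $\fb$ is maximal Cohen-Macaulay) and $w_R\to\Hom_R(\fb,w_R)$ surjective; precomposing with the surjection $w_R\twoheadrightarrow N$ forces $N\to N^{\dagger\dagger}$ surjective, while unmixedness of $N$ forces it injective, so $N$ is $w_R$-reflexive; therefore $N\cong N^{\dagger\dagger}=\fb^{\dagger}=\Hom_R(\fb,w_R)$ is the $w_R$-dual of the maximal Cohen-Macaulay module $\fb$, hence itself maximal Cohen-Macaulay, i.e. (i) holds. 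Undoing the reduction to $I=0$ finishes the proof.

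The step I expect to be the crux is the compatibility claimed in the second paragraph: that the connecting map $w_R\to\Hom_R(\fb,w_R)$ in the four-term sequence is genuinely the $w_R$-bidual of $w_R\twoheadrightarrow N$, so that its cokernel $\Ext^1_R(R/\fb,w_R)$ is governed by reflexivity/unmixedness of $N$ rather than being an uncontrolled obstruction; and, running alongside it, the unmixedness criterion $M\text{ unmixed}\Leftrightarrow M\hookrightarrow M^{\dagger\dagger}$ in the non-Gorenstein case, which is where the Gorenstein presentation $R\cong S/J$ and Lemma \ref{l3-1} do the real work. Granting these, both implications are formal consequences of duality for maximal Cohen-Macaulay modules over a Cohen-Macaulay local ring with canonical module, and they specialize, when $R$ is Gorenstein, to the argument of \cite[1.3]{PS}.
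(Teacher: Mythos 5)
Your proposal is correct, and the crux you flag does hold: dualizing $0\to\fb\to R\to R/\fb\to 0$ into $w_R$, the middle map $w_R=\Hom_R(R,w_R)\to\Hom_R(\fb,w_R)$ is, under the identification $\Hom_R(w_R/\fa w_R,w_R)\cong 0:_R\fa=\fb$, exactly the bidual of the projection $w_R\to N:=w_R/\fa w_R$, so by naturality of biduality (and reflexivity of $w_R$) its cokernel $\Ext^1_R(R/\fb,w_R)$ is the cokernel of $\delta_N:N\to N^{\dagger\dagger}$; and your unmixedness criterion (kernel of $\delta_N$ vanishes iff $N$ of dimension $\dim R$ is unmixed) is a standard localization-at-minimal-primes/Matlis-duality argument, which is exactly the mechanism inside Lemma \ref{l3-1} and the proof of Theorem \ref{t3}. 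Your route is organized differently from the paper's: the paper dualizes two different sequences, using $0\to\fa w_R\to w_R\to N\to 0$ (with $\fa w_R$ maximal Cohen--Macaulay by the depth lemma) to get $R/\fb\cong\Hom_R(\fa w_R,w_R)$ for (i)$\Rightarrow$(ii), and for (ii)$\Rightarrow$(i) dualizing $0\to\fb\to R\to R/\fb\to 0$ after invoking Theorem \ref{t3} to identify $\Hom_R(R/\fb,w_R)\cong 0:_{w_R}\fb=\fa w_R$, whence $N\cong\Hom_R(\fb,w_R)$. You instead work with the single sequence $0\to\fb\to R\to R/\fb\to 0$ in both directions and replace the appeal to \ref{t3} by the kernel/cokernel analysis of $\delta_N$ together with local duality (Cohen--Macaulayness of a $d$-dimensional module being equivalent to $\Ext^{>0}_R(-,w_R)=0$). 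What your version buys is self-containedness and uniformity: the linkage-transfer theorem \ref{t3} is not needed, its duality content being absorbed into the unmixedness criterion, and both implications become formal consequences of canonical duality for maximal Cohen--Macaulay modules. What the paper's version buys is brevity at this point: given \ref{t3}, each implication is a short diagram comparison of two short exact sequences, with no need to discuss biduality or the four-term sequence explicitly.
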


\begin{proof}
Considering the fact that $w_\frac{R}{I} \cong \frac{w_R}{I w_R}$, we may replace $R$ by $\frac{R}{I}$ and assume that $I= 0$.

     $"(i) \rightarrow (ii)"$  In view of the Cohen-Macaulayness of $\frac{w_R}{\fa w_R}$, $\frac{w_R}{\fa w_R}$ is unmixed. Consider the exact sequence     \begin{equation}\label{eq2} 0 \rightarrow \fa w_R \rightarrow {w_R} \rightarrow \frac{w_R}{\fa w_R} \rightarrow 0. \end{equation}

      As $\dim \fa w_R \geq \depth \fa w_R \geq \Min \{ \depth w_R , \depth \frac{w_R}{\fa w_R}  + 1\} = \depth R$, $\fa w_R$ is maximal Cohen-Macaulay and so is $\Hom _R(\fa w_R, w_R)$. Now, applying $\Hom _R(- , w_R)$ on (\ref{eq2}) and considering the isomorphisms $\Hom _R(w_R , w_R) \cong R$ and
      \begin{eqnarray*}\Hom _R(\frac{w_R}{\fa w_R} , w_R) &\cong& \Hom _R(\frac{R}{\fa}, \Hom _R(w_R , w_R))\\ &\cong& \Hom_R(\frac{R}{\fa}, R)
       \cong 0:_R \fa = \fb,\end{eqnarray*} we get the following commutative diagrams
     $$\begin{CD}
         &&&&&&&&\\
         \ \ &&&& 0 @>>> \Hom _R(\frac{w_R}{\fa w_R} , w_R) @>>> \Hom _R(w_R , w_R) @>>> \Hom _R(\fa w_R, w_R) @>>>0&  \\
          &&&&&&  @VV{\cong}V   @VV{\cong}V \\
         \ \  &&&& 0 @>>> \fb @>>> R
 @>>>\frac{R}{\fb} @>>>0.&\\
        \end{CD}$$\\

        This implies that $\frac{R}{\fb} \cong \Hom _R(\fa w_R, w_R)$. Hence $\frac{R}{\fb}$ is Cohen-Macaulay.

     $"(ii) \rightarrow (i)"$ Consider the exact sequence \begin{equation}\label{eq3} 0 \rightarrow \fb \rightarrow R \rightarrow \frac{R}{\fb} \rightarrow 0.\end{equation}

     As $\dim \fb \geq \depth \fb \geq \Min \{ \depth R , \depth \frac{R}{\fb}  + 1\} = \depth R$, $\fb$ is maximal Cohen-Macaulay and so is $\Hom _R(\fb , w_R)$. Applying $\Hom _R(- , w_R)$ on (\ref{eq3}) and using \ref{t3} and the natural isomorphism $\Hom _R(\frac{R}{\fb} , w_R) \cong 0:_{w_R} \fb = \fa w_R$, we get the following commutative diagrams
     $$\begin{CD}
         &&&&&&&&\\
         \ \ &&&& 0 @>>> \Hom _R(\frac{R}{\fb} , w_R) @>>> \Hom _R(R , w_R) @>>> \Hom _R(\fb, w_R) @>>>0&  \\
          &&&&&&  @VV{\cong}V   @VV{\cong}V \\
         \ \  &&&& 0 @>>> \fa w_R @>>> w_R
 @>>>\frac{w_R}{\fa w_R} @>>>0.&\\
        \end{CD}$$\\
        This implies that $\frac{w_R}{\fa w_R} \cong \Hom _R(\fb , w_R)$ is Cohen-Macaulay.
\end{proof}

\begin{cor} \label{c8}
Let $(R, \fm)$ be a Cohen-Macaulay local ring with the canonical module $w_R$ and $I$ be an ideal of $R$ such that $\fa\sim _{(I;w_R)}\fb$ and $\fa , \fb$ are pure height. Then the following statements hold.
 \begin{itemize}
   \item [(i)] $\frac{w_R}{\fa w_R}$ is Cohen-Macaulay if and only if $\frac{R}{\fb}$ is Cohen-Macaulay.
   \item [(ii)] $\gd_R \frac{R}{\fa} < \infty$ and $\frac{R}{\fa}$ is Cohen-Macaulay if and only if  $\gd_R \frac{R}{\fb} < \infty$ and $\frac{R}{\fb}$ is Cohen-Macaulay.
   \end{itemize}
\end{cor}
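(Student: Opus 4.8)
The plan is to derive both parts from Theorem~\ref{p3} after transporting the hypothesis $\fa\sim_{(I;w_R)}\fb$ to a statement about linkage over $R$. Since $(R,\fm)$ is Cohen--Macaulay it is catenary and equidimensional, so ``$\fa$ and $\fb$ of pure height'' is the same as ``$R/\fa$ and $R/\fb$ unmixed''; hence Theorem~\ref{t2} applies and yields $\fa\sim_{(I;R)}\fb$, in particular $I:_R\fa=\fb$, with $I$ generated by an $R$-regular sequence (a $w_R$-regular sequence is always $R$-regular). The conditions $I\ne\fa$ and $I\ne\fb$ needed in Theorem~\ref{p3} are automatic: if $\fa=I$, or merely $\fa w_R=Iw_R$, then $\fb w_R=Iw_R:_{w_R}\fa=w_R$, contradicting Definition~\ref{F1}, and likewise for $\fb$. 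Moreover $w_R$ is Cohen--Macaulay and $I$ is generated by a $w_R$-regular sequence, so $w_R/Iw_R$ is Cohen--Macaulay, hence unmixed; by Lemma~\ref{R1} the module $w_R/\fa w_R$ embeds in finitely many copies of $w_R/Iw_R$ and is therefore unmixed as well. Feeding these facts into Theorem~\ref{p3} gives part~(i): $w_R/\fa w_R$ is Cohen--Macaulay iff it is unmixed and $R/\fb$ is Cohen--Macaulay, iff $R/\fb$ is Cohen--Macaulay.

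For part~(ii) I would first reduce to the case $I=0$. The $R$-regular sequence generating $I$ lies in $\fa\cap\fb$ and annihilates $R/\fa$ and $R/\fb$, so by the standard change-of-rings theorem for $G$-dimension, $\gd_R(R/\fa)<\infty$ iff $\gd_{R/I}(R/\fa)<\infty$ (and similarly for $\fb$), while Cohen--Macaulayness is intrinsic; replacing $R$ by $R/I$, whose canonical module is $w_R/Iw_R$, we may assume $0:_R\fa=\fb$, $0:_R\fb=\fa$ and $0:_{w_R}\fa=\fb w_R$, $0:_{w_R}\fb=\fa w_R$. By Lemma~\ref{l6}(i), $\grad_R\fa=\grad_R 0=0$, so $\h_R\fa=\h_R\fb=0$ and $\dim R/\fa=\dim R/\fb=\dim R$. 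Now assume $\gd_R(R/\fa)<\infty$ and $R/\fa$ is Cohen--Macaulay; since the defining conditions of linkage over a module are symmetric in $\fa$ and $\fb$, it is enough to prove that $\gd_R(R/\fb)<\infty$ and $R/\fb$ is Cohen--Macaulay. As $\dim R/\fa=\dim R$, the module $R/\fa$ is maximal Cohen--Macaulay, so by the Auslander--Bridger formula $\gd_R(R/\fa)=\depth R-\depth(R/\fa)=0$, i.e.\ $R/\fa$ is totally reflexive. Hence its $R$-dual $\fb=0:_R\fa=\Hom_R(R/\fa,R)$ is totally reflexive, so $\gd_R\fb=0$ and the exact sequence $0\to\fb\to R\to R/\fb\to 0$ forces $\gd_R(R/\fb)\le 1$. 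For the Cohen--Macaulayness of $R/\fb$, part~(i) reduces us to showing that $w_R/\fa w_R=w_R\otimes_R(R/\fa)$ is Cohen--Macaulay.

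This last point is the step I expect to be the main obstacle. Since $R/\fa$ has finite $G$-dimension it lies in the Auslander class of the canonical module $w_R$, so $\operatorname{Tor}^R_i(R/\fa,w_R)=0$ for all $i>0$, and the depth formula for modules of finite $G$-dimension gives $\depth(w_R\otimes_R R/\fa)=\depth(R/\fa)+\depth w_R-\depth R=\depth(R/\fa)$; as $w_R$ is faithful, $\dim(w_R\otimes_R R/\fa)=\dim(R/\fa)$, whence $R/\fa$ Cohen--Macaulay implies $w_R\otimes_R R/\fa$ Cohen--Macaulay, and part~(i) then yields that $R/\fb$ is Cohen--Macaulay. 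Everything else is bookkeeping with the exact sequences already appearing in the proof of Theorem~\ref{p3}; the genuinely nontrivial inputs are the membership of finite-$G$-dimension modules in the Auslander class of $w_R$ (equivalently, the vanishing $\operatorname{Tor}^R_{>0}(R/\fa,w_R)=0$) and the depth formula in that generality.
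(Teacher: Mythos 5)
Your argument follows the paper's proof essentially step for step: invoke Theorem \ref{t2} (pure height $=$ unmixed over a Cohen--Macaulay local ring) to get $\fa\sim_{(I;R)}\fb$, deduce (i) from Theorem \ref{p3}, reduce (ii) to the case $I=0$ by a change of rings for G-dimension, show that $\frac{w_R}{\fa w_R}$ is Cohen--Macaulay from $\gd_R\frac{R}{\fa}<\infty$ together with Cohen--Macaulayness of $\frac{R}{\fa}$, and then apply (i). The only deviations are cosmetic: you fill in details the paper leaves implicit (unmixedness of $\frac{w_R}{\fa w_R}$ via Lemma \ref{R1}, the conditions $I\neq\fa$, $I\neq\fb$), you re-derive the cited input \cite[1.11]{KHY} through membership of $\frac{R}{\fa}$ in the Auslander class of $w_R$ and the depth formula, and you get $\gd_R\frac{R}{\fb}<\infty$ by dualizing the totally reflexive module $\frac{R}{\fa}$ (so $\gd_R\frac{R}{\fb}\le 1$), where the paper instead cites \cite{MS} to conclude $\gd_R\frac{R}{\fb}=0$.
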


\begin{proof}
Note that, by the assumption and \ref{t2}, $\fa\sim _{(I;R)}\fb$.
  \begin{itemize}
   \item [(i)] It is clear by \ref{p3}.

   \item [(ii)] Note that, $$ \gd_R \frac{R}{\fa} = \Max \{i| \Ext^i_R (\frac{R}{\fa}, R)\neq 0\}.$$ In view of, $$\Ext^i_R (\frac{R}{\fa}, R) \cong \Ext^{i-t}_{\frac{R}{I}} (\frac{R}{\fa}, \frac{R}{I}),$$ where $t := \grad_R\fa$, $\gd_R \frac{R}{\fa} < \infty$ if and only if $\gd_{\frac{R}{I}} \frac{R}{\fa} < \infty$. Hence, we may assume that $I =0$. By the assumption and \cite [1.11]{KHY}, $\frac{w_R}{\fa w_R}$ is Cohen-Macaulay. Hence, by (i), $\frac{R}{\fb}$ is Cohen-Macaulay. On the other hand, $\gd \frac{R}{\fa} = \depth R - \depth \frac{R}{\fa} = ht_R \fa=0$. Therefore, by \cite [1.p 595]{MS}, $\gd \frac{R}{\fb} =0.$
   \end{itemize}
\end{proof}

\begin{cor} \label{c9}
Let $(R, \fm)$ be a Cohen-Macaulay local ring. Assume that $\fa , \fb$ are pure height ideals of $R$ and $\fa\sim _{(0;M)}\fb$ where $M$ is a maximal Cohen-Macaulay $R$-module of finite injective dimension. Then,
\begin{itemize}
   \item [(i)] $\frac{M}{\fa M}$ is Cohen-Macaulay if and only if $\frac{R}{\fb}$ is Cohen-Macaulay.
   \item [(ii)] $\gd_R \frac{R}{\fa} < \infty$ and $\frac{R}{\fa}$ is Cohen-Macaulay if and only if  $\gd_R \frac{R}{\fb} < \infty$ and $\frac{R}{\fb}$ is Cohen-Macaulay.
   \end{itemize}
\end{cor}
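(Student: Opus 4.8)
The plan is to reduce both statements to Corollary \ref{c8} by identifying $M$ with a power of a canonical module. A maximal Cohen-Macaulay module of finite injective dimension over a Cohen-Macaulay local ring is a \emph{Gorenstein module} in the sense of Sharp, and the classical structure theory of such modules tells us that $R$ then possesses a canonical module $w_R$ and that $M\cong w_R^{\,n}$ for some integer $n\ge 1$. Granting this, the first step is to note that linkage of ideals over a module is invariant under module isomorphisms, so $\fa\sim_{(0;M)}\fb$ means $\fa\sim_{(0;w_R^{\,n})}\fb$; applying Lemma \ref{l13} inductively (with $I=0$) this is equivalent to $\fa\sim_{(0;w_R)}\fb$. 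Hence $\fa$ and $\fb$ are linked over $w_R$ by the zero ideal, and, being pure height by hypothesis, they satisfy the hypotheses of Corollary \ref{c8}.

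For part (i), since $M/\fa M\cong(w_R/\fa w_R)^{\,n}$, the module $M/\fa M$ is Cohen-Macaulay if and only if $w_R/\fa w_R$ is Cohen-Macaulay, and by Corollary \ref{c8}(i) the latter holds if and only if $\frac{R}{\fb}$ is Cohen-Macaulay.

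For part (ii), the assertion is exactly Corollary \ref{c8}(ii) applied to the link $\fa\sim_{(0;w_R)}\fb$: $\gd_R \frac{R}{\fa}<\infty$ together with Cohen-Macaulayness of $\frac{R}{\fa}$ is equivalent to $\gd_R \frac{R}{\fb}<\infty$ together with Cohen-Macaulayness of $\frac{R}{\fb}$.

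The only genuinely nontrivial point is the reduction $M\cong w_R^{\,n}$, and I expect this identification — together with the accompanying existence of a canonical module for $R$ — to be the main obstacle to a self-contained argument; everything after it is routine use of Lemma \ref{l13} and Corollary \ref{c8}. If one prefers not to quote the Gorenstein-module structure theorem over $R$ itself, an alternative is to base change to the completion $\hat{R}$, which is Cohen-Macaulay and hence has a canonical module with $\hat{M}\cong w_{\hat{R}}^{\,n}$: the link transports to $\hat{R}$ by Lemma \ref{l12}, and Cohen-Macaulayness and finiteness of $\gd$ descend along the faithfully flat map $R\to\hat{R}$; here one also uses that $\grad_M\fa=\grad_M\fb=0$ (Lemma \ref{l6}), which forces $\h_R\fa=\h_R\fb=0$, so that pure-heightedness survives completion.
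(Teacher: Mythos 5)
Your fallback route is the correct one, and it is exactly the paper's proof: pass to the completion via Lemma \ref{l12}, invoke the structure theorem for maximal Cohen--Macaulay modules of finite injective dimension over the \emph{complete} Cohen--Macaulay ring $\hat{R}$ (\cite[3.3.28]{BH}) to write $\hat{M}\cong w_{\hat{R}}^{\,n}$, reduce to $\fa\hat{R}\sim_{(0;w_{\hat{R}})}\fb\hat{R}$ by Lemma \ref{l13}, and apply Corollary \ref{c8}. Your supplementary remarks (Cohen--Macaulayness and finiteness of G-dimension ascend and descend along $R\to\hat{R}$, and pure height survives completion because $\grad_M\fa=\grad_M\fb=0$ forces all associated primes in question to have height $0$, which is preserved since $\hat{R}$ is Cohen--Macaulay and flat over $R$) are details the paper leaves implicit, and they are correct and worth making explicit.

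The genuine gap is in your preferred primary route: it is \emph{not} true in general that a maximal Cohen--Macaulay module of finite injective dimension over a (non-complete) Cohen--Macaulay local ring decomposes as $w_R^{\,n}$, nor even that such an $R$ possesses a canonical module. Sharp's theory of Gorenstein modules only gives the decomposition after completion; over $R$ itself a Gorenstein module may be indecomposable of rank $\geq 2$, and a canonical module exists precisely when there is a Gorenstein module of rank one (equivalently, for Cohen--Macaulay $R$, when $R$ is a homomorphic image of a Gorenstein ring) --- there are known examples of Cohen--Macaulay local rings carrying Gorenstein modules but no canonical module. So the identification $M\cong w_R^{\,n}$ cannot be ``granted'' over $R$; the completion step is not an optional variant but the essential move, which is why the paper begins with Lemma \ref{l12}. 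With the argument run through $\hat{R}$ as you sketched, the proof is complete and agrees with the paper's.
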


\begin{proof}
By \ref{l12}, we may assume that $R$ is complete and it has the canonical module $w_R$. In view of \cite[3.3.28]{BH}, $M \cong \oplus ^l w_R$ for some $l \in\mathbb{N}_0$ and, using \ref{l13}, $\fa\sim _{(0;w_R)}\fb$.

Now, the results follow from \ref{c8}.
\end{proof}

\begin{thm} \label{t5}
Let $(R, \fm)$ be a local ring, $M$ be a Cohen-Macaulay $R$-module of finite injective dimension and $\fa , \fb$ be two ideals of $R$ such that $\fa\sim _{(0;M)}\fb$. Also, assume that $\depth R = \depth\frac{R}{\fa} = \depth \frac{R}{\fb}$. Then, $\frac{M}{\fa M}$ is Cohen-Macaulay if and only if $\frac{M}{\fb M}$ is Cohen-Macaulay.
\end{thm}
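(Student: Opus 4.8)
Pass to the completion and use the structure of finitely generated modules of finite injective dimension over a Cohen--Macaulay local ring with canonical module; the statement then reduces to the behaviour of ideals linked over the canonical module, in the spirit of Section~3 and Theorem~\ref{p3}.

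First, by \ref{l12} we may replace $(R,M)$ by $(\hat{R},M\otimes_R\hat{R})$ and so assume $R$ complete, hence with a canonical module $w_R$. Since $M\neq0$ is finitely generated of finite injective dimension, $R$ is Cohen--Macaulay, say $\dim R=\depth R=:d$; then $\depth\frac R\fa=\depth\frac R\fb=d$, together with $\depth\le\dim\le d$ for these quotients, forces $\frac R\fa$ and $\frac R\fb$ to be Cohen--Macaulay of dimension $d$. Also $\fa\subseteq\fp$ for some $\fp\in\Ass_R M$: otherwise $0:_M\fa=0$, whence $\fb M=0$, whence $\fa M=0:_M\fb=M$, contradicting $\fa M\neq M$. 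As $M$ is Cohen--Macaulay, such $\fp$ is minimal in $\Supp M$ with $\dim R/\fp=\dim M=:e$, so $\dim\frac M{\fa M}=\dim\frac M{\fb M}=e$; hence it is enough to show $\depth\frac M{\fa M}=\depth\frac M{\fb M}$.

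Next, since $\id_R M<\infty$, $M$ lies in the Bass class of $w_R$; that is, with $M^{\natural}:=\Hom_R(w_R,M)$ one has $\pd_R M^{\natural}<\infty$, $\Tor^R_{>0}(w_R,M^{\natural})=0$ and $w_R\otimes_R M^{\natural}\cong M$. As $\Supp M^{\natural}=\Supp M$, the depth formula gives $\depth_R M^{\natural}=e$, so $M^{\natural}$ is a perfect Cohen--Macaulay module with $\pd_R M^{\natural}=d-e=:c$. Fix $\fc\in\{\fa,\fb\}$. Since $\frac R\fc$ is Cohen--Macaulay of dimension $d$, $\uhom(R/\fc,w_R)=w_{R/\fc}$ is concentrated in degree $0$, so tensor evaluation (valid since $M^{\natural}$ is perfect and $M\simeq w_R\utp_R M^{\natural}$) gives
\[\uhom(R/\fc,\,M)\ \simeq\ \uhom(R/\fc,\,w_R)\utp_R M^{\natural}\ =\ w_{R/\fc}\utp_R M^{\natural},\]
a complex in cohomological degrees $[-c,0]$ which, being also nonnegatively graded, is concentrated in degree $0$. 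Hence $\Ext^{>0}_R(R/\fc,M)=0$, so the canonical monomorphism $M/(0:_M\fc)\hookrightarrow\Hom_R(\fc,M)$ is an isomorphism; since $\fa M=0:_M\fb$ and $\fb M=0:_M\fa$, this yields $\frac M{\fa M}\cong\Hom_R(\fb,M)$ and $\frac M{\fb M}\cong\Hom_R(\fa,M)$.

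Now apply $\Hom_R(-,w_R)$ to $0\to\fc\to R\to R/\fc\to0$; as $\Ext^{>0}_R(R/\fc,w_R)=0$ we get an exact sequence $0\to w_{R/\fc}\to w_R\to\Hom_R(\fc,w_R)\to0$ with $w_{R/\fc}=0:_{w_R}\fc$ and first two terms maximal Cohen--Macaulay, and it shows $\uhom(\fc,w_R)=\Hom_R(\fc,w_R)$ is concentrated in degree $0$. Tensor evaluation against $M^{\natural}$ then gives $\Hom_R(\fc,M)\cong\Hom_R(\fc,w_R)\otimes_R M^{\natural}$ with $\Tor^R_{>0}(\Hom_R(\fc,w_R),M^{\natural})=0$, so by the depth formula $\depth_R\frac M{\fa M}=\depth_R\Hom_R(\fb,w_R)+e-d$ and $\depth_R\frac M{\fb M}=\depth_R\Hom_R(\fa,w_R)+e-d$. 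Thus $\frac M{\fa M}$ (resp.\ $\frac M{\fb M}$) is Cohen--Macaulay exactly when $\Hom_R(\fb,w_R)$ (resp.\ $\Hom_R(\fa,w_R)$) is maximal Cohen--Macaulay. Finally, $\Hom_R(\fc,w_R)\cong w_R/w_{R/\fc}$, and from the exact sequence above and Matlis duality this module is maximal Cohen--Macaulay iff the restriction $R=\Hom_R(w_R,w_R)\to\Hom_R(w_{R/\fc},w_R)$ is onto; but, just as in the proof of \ref{p3}, that map is the natural surjection $R\to R/\fc$. Hence $\Hom_R(\fa,w_R)$ and $\Hom_R(\fb,w_R)$ are both maximal Cohen--Macaulay, so $\frac M{\fa M}$ and $\frac M{\fb M}$ are both Cohen--Macaulay; in particular they are equivalent. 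The delicate part of the argument is the structural input that $M\cong w_R\otimes_R M^{\natural}$ for a perfect Cohen--Macaulay module $M^{\natural}$ of finite projective dimension, together with the bookkeeping of the degree-zero concentrations that carry $\Ext$- and $\Tor$-vanishing through the tensor with $M^{\natural}$; once that is set up, everything runs parallel to the canonical-module computations of Section~3.
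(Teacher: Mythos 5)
Your argument is essentially correct, but it is a genuinely different proof from the one in the paper. The paper argues by induction on $\depth R$: it chooses $x\in\fm$ regular on $R$, $M$, $\frac{R}{\fa}$ and $\frac{R}{\fb}$, uses \cite[3.1.24]{BH} (finite injective dimension plus $\depth\frac{R}{\fa}=\depth R$ gives $\Ext^{>0}_R(\frac{R}{\fa},M)=0$) together with Lemma \ref{l14} to descend the linkage to $\frac{M}{xM}$ over $\frac{R}{xR}$, and finishes with \ref{R1} and the base case $\depth R=0$, where $M$ is injective and $\dim R=0$. You instead complete (via \ref{l12}), invoke Bass' theorem to get $R$ Cohen--Macaulay, and run the Sharp--Foxby equivalence $M\cong w_R\otimes_R\Hom_R(w_R,M)$ with $\Hom_R(w_R,M)$ of finite projective dimension, plus derived tensor evaluation, Auslander's depth formula and canonical duality; this machinery is much heavier than anything the paper uses, but each ingredient is standard and the degree bookkeeping you do is sound. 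What your route buys is a sharper conclusion: under the stated hypotheses $\frac{R}{\fa}$ and $\frac{R}{\fb}$ are forced to be maximal Cohen--Macaulay, and you show both $\frac{M}{\fa M}$ and $\frac{M}{\fb M}$ are automatically Cohen--Macaulay (this is also implicit in the paper's induction, whose base case is vacuous, but you make it explicit); it also isolates where linkage enters, namely in $\frac{M}{\fa M}\cong\Hom_R(\fb,M)$. Two small points to tidy: the duality you call ``Matlis duality'' in the last step is really local/canonical duality, and the identification of $R\to\Hom_R(w_{R/\fc},w_R)$ with the surjection $R\to R/\fc$ is not literally ``as in \ref{p3}'' but rests on $\Hom_R(w_{R/\fc},w_R)=\Hom_{R/\fc}(w_{R/\fc},w_{R/\fc})\cong R/\fc$ for the Cohen--Macaulay ring $R/\fc$; in fact the whole last step can be shortened, since the depth lemma makes $\fc$ itself maximal Cohen--Macaulay, whence $\Hom_R(\fc,w_R)$ is maximal Cohen--Macaulay by \cite[3.3.10]{BH}.
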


\begin{proof}
We prove the claim by induction on $n := \depth R$. If $n=0$ then, by \cite[18.9]{M} and \cite[3.1.23]{BH}, $M$ is injective and $\dim R = 0$. So, the result is clear. Now, assume that $n > 0$. In case $\dim M =0$, there is nothing to prove. Therefore, we may assume that $\dim M > 0$ and there exists $x \in \fm - \Z(M) \cup \Z(R)\cup \Z(\frac{R}{\fa})\cup \Z(\frac{R}{\fb})$. In view of \cite[3.1.24]{BH}, $\Ext^i_R (\frac{R}{\fa},M) = \Ext^i_R (\frac{R}{\fb},M) = 0$ for all $i>0$. Hence, by \ref{l14}, $(\fa,x) \sim _{(0;\frac{M}{xM})}(\fb,x)$. Therefore, by inductive hypothesis, $\frac{M}{(\fa,x) M}$ is Cohen-Macaulay if and only if $\frac{M}{(\fb,x) M}$ is Cohen-Macaulay. Now, the result follows from \ref{R1} and the fact that $x \notin \Z(\frac{M}{\fa M}) \cup  \Z(\frac{M}{\fb M})$.
\end{proof}

\bibliographystyle{amsplain}

\end{document}